\begin{document}

\title{On the control of a simplified $k-\varepsilon$ model of turbulence
\thanks{Partially supported by UESPI, Teresina - PI, Brazil, by CAPES Foundation, Ministry of Education of Brazil, Bras\'{i}lia DF 70040-020, Brazil and
 by grant MTM2016-76990-P, DGI-MICINN (Spain).}
}

\titlerunning{On the control of a simplified $k-\varepsilon$ model of turbulence}        

\author{Pit\'{a}goras~P.~Carvalho \\ Enrique~Fern\'{a}ndez-Cara \\ Juan~Limaco}

\authorrunning{P.P de Carvalho, J.B.L. Ferrel, E.Fernandez-Cara.} 

\institute{Pit\'{a}goras~P.~Carvalho \at
              Coor. Matem\'{a}tica - Universidade Estadual do Piau\'{i}, Teresina - PI, Brazil\\
              \email{pitagorascarvalho@gmail.com}           
           \and
           Enrique~Fern\'{a}ndez-Cara \at
           Dpto. E.D.A.N., Universidad de Sevilla, Aptdo. 1160, 41080 Sevilla, Spain \\
              Tel.: +34 954557992\\
              \email{cara@us.es}
           \and
            Juan~Limaco \at
          Inst. Matem\'{a}tica - Universidade Federal Fluminense, RJ, Brazil, \\
              Tel.:  +55 2126292111\\
              \email{jlimaco@vm.uff.br}
}

\date{Received: date / Accepted: date}

\maketitle

\begin{abstract}
This paper deals with the control of a kind of turbulent flows. We consider a simplified $k - \varepsilon$ model with distributed controls, locally supported in space. We proof that the system is partially locally null-controllable, in the sense that the velocity field can be driven exactly to zero if the initial state is small enough. The proof relies on an argument where we have concatenated several techniques: fixed-point formulation, linearization, energy and Carleman estimates, local inversion, etc. Ths result can be viewed as a nontrivial step towards the control of turbulent fluids.
\keywords{Partial null controllability \and Nonlinear parabolic PDEs \and Carleman estimates \and Turbulence models}
\end{abstract}

\section{Introduction}
\label{Sec1}

   Let $\Omega \subset \mathds{R}^N$) be a bounded connected open set with a regular boundary $\partial\Omega$
 ($N=2$ or $N=3$ and let $\omega\subset \Omega$ be a (small) nonempty open set.
   
   Let $T>0$ be given and let us consider the cylindrical domain $Q=\Omega \times(0,T)$, with lateral boundary $\Sigma=\partial\Omega\times(0,T)$.
   In the sequel, $(\cdot\,,\cdot)$ and~$\| \cdot \|$ stand for the $L^{2}$ scalar product and norm in $\Omega$, respectively.
   We will denote by $C$ a generic positive constant; sometimes, we will indicate the data on which it depends. On the other hand, $n(x)$ will stand for the outwards unit normal vector to $\Omega$ at the point $x\in\partial\Omega$.

   We will investigate the local null controllability properties of a simplified model of turbulence of the~$k-\varepsilon$ kind.
   
   In order to present and justify the state system, let us start from the usual $k-\varepsilon$ model:
     \begin{equation}
\left\{
\begin{array}
[c]{ll}%
\vspace{.2cm}
\displaystyle v_{t} + (v\cdot\nabla)v - \nabla\cdot((\nu+c_{\nu}\frac{k^2}{\varepsilon})Dv) + \nabla(p-\frac{2}{3}k) = u1_{\omega} & \mbox{in}\;\;Q,\\
\vspace{.1cm}
\nabla\cdot{v} = 0 & \mbox{in}\;\;Q,\\
\vspace{.2cm}
\displaystyle k_{t} + v\cdot\nabla k - \nabla\cdot((\kappa+c_{0}\frac{k^2}{\varepsilon})\nabla k) = c_{\nu}\frac{k^2}{\varepsilon}|Dv|^2 - \varepsilon & \mbox{in}\;\;Q,\\
\vspace{.1cm}
 \displaystyle \varepsilon_{t} + v\cdot\nabla\varepsilon - \nabla\cdot((\kappa'+c'_{0}\frac{k^2}{\varepsilon})\nabla \varepsilon) = c_{\eta}k|Dv|^2 - a\frac{\varepsilon^{2}}{k} & \mbox{in}\;\;Q,\\
\vspace{.2cm}
v =  0 & \mbox{on}\;\;\Sigma,\\
\vspace{.2cm}
\displaystyle \frac{\partial k}{\partial n} = 0  \ \ \ \mbox{and} \ \ \ \frac{\partial \varepsilon}{\partial n} = 0 \ \ \ \ & \mbox{on}\;\;\Sigma,\\
\vspace{.1cm}
v(x,0) = v_{0}(x), \;\ \  \   k(x,0) = k_{0}(x) \;\  \ \  \mbox{and}  \;\  \ \  \varepsilon(x,0) = \varepsilon_{0}(x) \; \; \ & \mbox{in}\;\;\Omega.\\
\end{array}
\right.  \label{1.1}
   \end{equation}
\noindent

   Here, $v = v(x,t)$, $p=p(x,t)$, $k = k(x,t)$ and $\varepsilon = \varepsilon(x,t)$ are the ``averaged" velocity field, pressure, turbulent kinetic energy and rate of turbulent energy dissipation of a fluid whose particles are located in $\Omega$ during the time interval $(0,T)$; $v_{0}$, $k_{0}$ and $\varepsilon_{0}$ are the initial conditions at time $t = 0$; $1_{\omega}$ is the characteristic function of $\omega$; $\nu, \ c_{\nu}, \ \kappa, \ c_{0}, \ \kappa', \ c'_{0}, \ c_{\eta}$ and  $a$ are positive constants; see~\cite{17p}.

   The quantity $\nu_{T} := c_{\nu} k^2/\varepsilon$ is the so called turbulent viscosity, and $Dv$ stands for the symmetrized gradient of $v$, that is, $Dv := \nabla v + \nabla^{T} v$. Recall that $\nu_{T}$ appears in the averaged motion PDE as a consequence of the following Boussinesq hypothesis:
   $$
\overline{v' \otimes v'} = -\nu_{t}Dv - \frac{2}{3}k\,\mbox{Id.},
   $$
where, for any $z=z(x,t)$, $\bar{z}$ denotes the average of $z$ and $v'$ is the turbulent pertubation of the velocity field; for more details, see for instance \cite{T.Chacon}, \cite{G-O-P} and \cite{M.Lesieur}.

  The following vector spaces, usual in the context of incompressible fluids, will be used along the paper:
   $$
H = \{ w \in L^2(\Omega)^{N} : \nabla\cdot w = 0 \ \text{ in } \ \Omega, \;\  w\cdot n = 0 \ \text{ on } \ \partial\Omega \},
   $$
   and
   $$
V = \{ w \in H^1_{0}(\Omega)^{N} : \nabla\cdot w = 0 \ \text{ in } \ \Omega \}.
   $$

   We will denote by $A : D(A) \mapsto H$ the Stokes operator. By definition, one has
   $$
D(A) = H^2(\Omega)^{N} \cap V ; \;\  \ A w = P (-\Delta w) \ \ \ \forall w \in D(A),
   $$
where $P : L^2(\Omega)^{N} \mapsto H$  is the usual orthogonal projector.

   We will work with a simplified version of \eqref{1.1}.
   To this purpose, we introduce
   $$
\phi_{0} := \frac{k^{2}}{\varepsilon}
   $$
and we assume that $\phi_0$ only depends on $t$.
   As a consequence, after some straightforward computaions,we get the following PDEs and additional conditions for $ \displaystyle v, \ q:= p - \frac{2}{3} k$ and $k$:
     \begin{equation}
\left\{
\begin{array}[c]{ll}%
\vspace{.2cm}
\displaystyle v_{t} + (v\cdot\nabla)v - \nabla \cdot((\nu+c_{\nu}\phi_{0}) Dv) + \nabla q = u 1_{\omega} & \mbox{in}\;\;Q,\\
\nabla\cdot{v} = 0 & \mbox{in}\;\;Q,\\
\vspace{.1cm}
\displaystyle k_{t} + (v\cdot\nabla)k - \nabla \cdot (( \kappa + c_0\phi_{0}) \nabla k) + \frac{k^{2}}{\phi_{0}} =  c_{\nu}\phi_{0}|Dv|^2 \; \; \ & \mbox{in}\;\;Q,\\
\vspace{.2cm}
v = 0 & \mbox{on}\;\;\Sigma,\\
\vspace{.2cm}
\displaystyle \frac{\partial k}{\partial n} = 0 & \mbox{on}\;\;\Sigma,\\
\vspace{.2cm}
v(x,0) = v_{0}(x) \;\ \ \ \mbox{and} \ \ \ \   k(x,0) = k_{0}(x)  & \mbox{in}\;\;\Omega.\\
\end{array}
\right.  \label{1.2}
   \end{equation}

   Additionally, $\phi_0$ must satisfy the ODE problem:
   \begin{equation} \nonumber
\left\{
\begin{array}[c]{ll}
\vspace{.3cm}
\displaystyle \phi_{0,t} \!=\! \Biggl[\displaystyle \frac{2c_{\nu}\!-\!c_{\eta}}{|\Omega|} \Biggl(\int_\Omega \!\frac{|Dv|^2}{k} \,dx\Biggr) \!-\! \displaystyle \frac{2{c_{0}}}{|\Omega|}\Biggl(\int_\Omega \!\frac{|\nabla k|^2}{k^2} \,dx\Biggr)\Biggr] \phi_{0}^2 \!+\! \displaystyle \frac{a\!-\!2}{|\Omega|}\Biggl(\int_\Omega \! k\,dx\Biggr),  \\
\phi_{0}(0)  =  \phi_{00},
\end{array}
\right.
   \end{equation}
where $\phi_{00}$ is a positive initial datum.

   For simplicity, we will consider the previous system with $c_{\eta}=2c_{\nu}$ and $a\geq2$. Accordingly, one has
   \begin{equation} \label{1.3}
\left\{
\begin{array}
[c]{ll}%
\vspace{.3cm}
\displaystyle \phi_{0,t} = - \frac{2{c_{0}}}{|\Omega|} \Biggl(\int_\Omega \frac{|\nabla k|^2}{k^2} \ dx\Biggr) \phi_{0}^2 \ + \ \frac{a-2}{|\Omega|} \Biggl(\int_\Omega k \ dx \Biggr)  & \mbox{in}\;\; (0,T), \\
\phi_{0}(0)= \phi_{00} .
\end{array}
\right.
   \end{equation}

   In~\eqref{1.2}--\eqref{1.3}, $u$ is the control and $(v,q,k,\phi_0)$ is the state. When $N = 2$, for any $v_{0} \in H$, any nonnegative bounded $C^1$ function $\phi_{0}$ and any $ u \in L^2(\omega \times (0,T))^N$, \eqref{1.2}--\eqref{1.3}, possesses exactly one strong solution $(v, q)$, with
   \begin{equation}\label{1.4}
v\in L^2(0,T; D(A))\cap C^0([0,T];V), \ \ \ v_{t} \in L^2(0,T;H).
   \end{equation}
   When $N = 3$, this is true if $v_{0}$ and $u$ are sufficiently small in their respective spaces.
   These assertions can be deduced arguing (for instance) as in~\cite{Foias}.

\begin{definition} {\it Let $k_0$ and $\phi_{00}$ be given, with
   \begin{equation}\label{1.4a}
k_0 \in H^1(\Omega),  \ k_0\geq 0 \ \ \mbox{a.e.}, \ \phi_{00} \in \mathds{R}_+  .
   \end{equation}
   It will be said that \textsc{\eqref{1.2}--\eqref{1.3}} is partially locally null-controllable at time $T$ if there exists  $\epsilon > 0$ such that, for any $v_{0} \in V$  with
   $$
\| v_{0} \|_{H^1_0 } \leq \epsilon,
   $$
there exist controls $u \in L^{2}(\omega\times(0,T))^N $ and associated states $(v, q)$ satisfying \textsc{\eqref{1.4}} and
   \begin{equation}\label{1.1.1}
v(x,T) = 0 \ \text{ in } \ \Omega.
   \end{equation}
}
\end{definition} \label{D1}

   The main result in this paper is the following:

\begin{theorem}{For any $k_0$ and $\phi_0$ satisfying \textsc{\eqref{1.4a}} and any $T>0$, the nonlinear system \textsc{\eqref{1.2}--\eqref{1.3}} is partially locally null-controllable at  $T$.} \label{T1}
\end{theorem}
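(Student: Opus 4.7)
My strategy is to combine a null controllability result for an appropriate linearized $v$--equation (proved via Carleman estimates for its adjoint) with a fixed-point / local inversion argument that closes the coupling with the scalar $k$ and $\phi_0$. The key structural feature I would exploit is that $\phi_{0}$ is a function of $t$ only, so the variable viscosity $\nu(t):=\nu+c_{\nu}\phi_{0}(t)$ in the momentum equation is spatially uniform; the controlled PDE looks, modulo the convective nonlinearity, like a Stokes system with time-dependent but spatially constant viscosity, for which Carleman technology is well developed.

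\textbf{Step 1 (Carleman and linearized controllability).} For each admissible $\phi_0\in L^\infty(0,T)$ with $\phi_{0}\ge\alpha>0$, I would study the adjoint
\begin{equation*}
-\varphi_{t}-\nabla\!\cdot(\nu(t)D\varphi)+\nabla\pi=g,\qquad \nabla\!\cdot\varphi=0,\qquad \varphi|_{\Sigma}=0,\qquad \varphi(T)=\varphi_{T},
\end{equation*}
and establish a global Carleman inequality with weights $e^{-s\alpha(x,t)}$ blowing up at $t=0,T$ and singular outside $\omega$. Since $\nu(t)$ is merely bounded from above and below, the standard Imanuvilov--Fern\'andez-Cara--Guerrero--Puel estimates for Stokes adapt with essentially no change, giving the observability inequality that translates, by duality, into null controllability of
\begin{equation*}
v_{t}-\nabla\!\cdot(\nu(t)Dv)+\nabla q=u\mathds{1}_{\omega}+f,\qquad \nabla\!\cdot v=0,\qquad v|_{\Sigma}=0,\qquad v(0)=v_{0},
\end{equation*}
together with weighted a priori bounds on $(v,u)$ in terms of $(v_{0},f)$ in suitable weighted spaces.

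\textbf{Step 2 (Fixed point for the full nonlinear coupling).} I would then set up a self-map on a small ball of a product weighted space $\mathcal{Y}$ containing the triple $(\bar v,\bar k,\bar\phi_{0})$: given $(\bar v,\bar k,\bar\phi_{0})$, define $f:=-(\bar v\cdot\nabla)\bar v$, apply Step~1 with viscosity $\nu+c_\nu\bar\phi_{0}$ to obtain $(v,u)$ with $v(T)=0$; then solve the semilinear parabolic problem for $k$ (with data $v$ and $\bar\phi_{0}$); and finally integrate the ODE \eqref{1.3} for $\phi_{0}$. The smallness hypothesis $\|v_{0}\|_{H^{1}_{0}}\le\epsilon$ is what allows the quadratic term $(v\cdot\nabla)v$ to be absorbed as a perturbation and the iterates to remain in the ball. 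Either Schauder (using parabolic compactness for $v$ and $k$ and equicontinuity from the ODE) or Liusternik's local inversion theorem applied to a suitable $C^{1}$ map $\mathcal{F}(v,u,k,\phi_{0})=(\text{residuals},v(T))$ with surjective derivative at zero (by Step~1) delivers a fixed point.

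\textbf{Step 3 (Positivity issues --- the main obstacle).} The delicate point, and the one I expect to absorb most of the technical work, is making sense of the singular nonlinearities $k^{2}/\phi_{0}$ in the $k$--equation and $|\nabla k|^{2}/k^{2}$ in \eqref{1.3}. One must propagate pointwise bounds $0<\phi_{*}\le\phi_{0}(t)\le\phi^{*}$ and guarantee that the quotient $|\nabla k|^{2}/k^{2}$ is integrable along the iteration. For $k$, nonnegativity is preserved because the source $c_{\nu}\phi_{0}|Dv|^{2}$ and the initial datum are nonnegative, and an $H^{1}$ energy bound controls $\nabla k$; strict positivity for $t>0$ can be obtained via the strong maximum principle, possibly after a small regularization of $k_{0}$ that is removed at the end. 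For $\phi_{0}$, the only destabilizing term in the ODE is $-\tfrac{2c_0}{|\Omega|}(\int|\nabla k|^{2}/k^{2})\phi_{0}^{2}$; using a logarithmic change of variable $\psi=\log\phi_{0}$ one reduces the ODE to a linear-in-$\psi$ relation whose integrability is dictated by $\|\nabla k / k\|_{L^{2}(Q)}^{2}$, which in turn is controlled by the $H^{1}$ norm of $k$ via the minimum of $k$.

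\textbf{Step 4 (Conclusion).} Once a fixed point is found, it satisfies \eqref{1.2}--\eqref{1.3} with $v(T)=0$, which is exactly the partial local null controllability of Definition~\ref{D1}. The crux of the argument, compatibility between the Carleman weights (which blow up at $t=T$ and dictate the space of $v$) and the energy/maximum principle estimates needed for $k$ and $\phi_{0}$, is precisely where the smallness of $\epsilon$ and the careful choice of weighted spaces in $\mathcal{Y}$ have to be balanced.
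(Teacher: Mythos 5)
Your outline is essentially the strategy of the paper: a Carleman inequality for the adjoint Stokes system with spatially uniform, time-dependent viscosity $\nu+c_{\nu}\Phi_{0}(t)$, weighted null controllability of the linearized $v$--equation, and a compactness/fixed-point closure of the coupling with $k$ and $\phi_{0}$, with the lower bound on $\phi_{0}$ extracted from the sign structure of the Riccati-type ODE. The one place where your architecture genuinely differs is Step~2, and one of the two alternatives you hedge between would be problematic. The paper uses a \emph{two-stage} nesting: Liusternik's inverse mapping theorem is applied only to the map $(v,q,u)\mapsto(v_{t}+(v\cdot\nabla)v-(\nu+c_{\nu}\Phi_{0})\Delta v+\nabla q-u1_{\omega},\,v(\cdot,0))$ with $\Phi_{0}$ \emph{frozen}, yielding a solution operator $S(\,\cdot\,;\Phi_{0})$; only afterwards is Schauder applied to the map $(\tilde v,\tilde\phi)\mapsto(v,\phi_{0})$ that threads through the $k$-equation and the ODE, where merely continuity and compactness are needed. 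Your Schauder variant (freeze $\bar v$ in the convection and $\bar\phi_{0}$ in the viscosity, one compact self-map on a small ball) is viable and close to this, provided you verify the weight compatibility $\tilde\eta^{2}\le C\zeta\gamma^{3}$ so that $\tilde\eta(\bar v\cdot\nabla)\bar v\in L^{2}$ follows from the weighted bounds the output $v$ actually satisfies --- this is exactly inequality (\ref{3.40}) in the paper. By contrast, your Liusternik variant applied to a single map $\mathcal{F}(v,u,k,\phi_{0})$ covering the whole system would run into two obstructions the paper deliberately avoids: the nonlinearities $k^{2}/\phi_{0}$ and $|\nabla k|^{2}/k^{2}$ are not $C^{1}$ near the boundary of the positivity cone, and surjectivity of the derivative at zero would require an observability estimate for a \emph{coupled} linearized system containing nonlocal terms (precisely the difficulty the authors flag in Section~4 for controllability to trajectories), not just the Stokes observability of your Step~1. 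Your Step~3 concerns are legitimate but are resolved in the paper more cheaply than you anticipate: the set $G$ is defined with the a priori bounds $\beta_{0}\le\tilde\phi\le M$ and $\iint|D\tilde v|^{2}\le b_{1}^{2}$ built in, and one only checks that $\mathcal{B}$ preserves them, using the explicit integrated form of the ODE rather than a logarithmic change of variable or a maximum principle for $k$.
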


   For the proof, we will have to employ several different techniques, all them usual in this context nowadays. In particular, we will rewrite the partial null controllability problem as a fixed-point equation and we will apply an \textit{Inverse Mapping Theorem} in Hilbert spaces and then \textit{Schauder's Fixed-Point Theorem}. The arguments are inspired by the work of Fursikov and Imanuvilov; see~\cite{E.M3},  \cite{F-II} and \cite{F-I}.


   This paper is organized as follows. In Section~2, we prove some technical results, needed to establish the null contollability of \eqref{1.2}--\eqref{1.3}. In Section~3, we give the proof of Theorem \ref{T1}. Then, some additional comments and questions are presented in Sections~4.

\section{Preliminary results}\label{Sec2}

   In this section, we set $\mu := \nu + c_{\nu}{\Phi}_{0}(t)$, where $\Phi_0$ is a nonnegative function in~$C^1([0,1])$.
   We will consider the linear system
     \begin{equation}
\left\{
\begin{array}
[c]{ll}%
\vspace{.2cm}
\displaystyle v_{t} - (\nu+c_{\nu}\Phi_{0}) \Delta v + \nabla q = u 1_{\omega} + f \; \ \, &  \mbox{in}\;\;Q,\\
\vspace{.2cm}
\nabla\cdot{v} = 0 & \mbox{in}\;\;Q,\\
\vspace{.2cm}
v = 0 & \mbox{on}\;\;\Sigma,\\
v(x,0) = v_{0}(x)  \;\ \ \ \ & \mbox{in}\;\;\Omega\\
\end{array}
\right.  \label{2.1a}
   \end{equation}
and the adjoint
     \begin{equation}
\left\{
\begin{array}
[c]{ll}%
\vspace{.2cm}
-\varphi_{t} - (\nu+c_{\nu}\Phi_{0}) \Delta \varphi + \nabla\pi = F \; \ \, & \mbox{in}\;\;Q,\\
\vspace{.2cm}
\nabla\cdot{\varphi} = 0 & \mbox{in}\;\;Q,\\
\vspace{.2cm}
\varphi = 0 & \mbox{on}\;\;\Sigma,\\
\varphi(x,T) = {\varphi}_{T}(x)  \;\ \ \ \ & \mbox{in}\;\;\Omega .\\
\end{array}
\right.  \label{2.1b}
   \end{equation}

\subsection{Some Carleman estimates}
\noindent

   We will need some (well known) results from Fursikov and Imanuvilov~\cite{F-II}; see also~\cite{Enrique_Guerrero}.
   Also, it will be convenient to introduce a new non-empty open set $\omega_{0}$, with $\omega_{0} \Subset \omega$.
  The following technical lemma, due to Fursikov and Imanuvilov ~cite{F-II}, is fundamental:

\begin{lemma} {There exists a function $\eta^0 \in C^2(\overline{\Omega})$ satisfying:}\label{L1}
     \[
\left\{
\begin{array}
[c]{l}%
\vspace{.2cm}
\eta^0(x) > 0 \;\  \ \forall x \in \Omega, \;\  \ \eta^0(x) = 0 \;\  \ \forall x \in \partial\Omega \;\ \mbox{and} \\
|\nabla \eta^0(x)| > 0 \; \; \forall x \in \overline {\Omega}\setminus \omega_{0}.

\end{array}
\right.
\]
\end{lemma}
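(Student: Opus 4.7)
The plan is to construct $\eta^0$ in two stages: first on the complement of $\omega_0$, where the gradient must be non-vanishing, and then to glue in a smooth extension across $\omega_0$, where nothing is required. Fix an intermediate open set $\omega_1$ with $\omega_0 \Subset \omega_1 \Subset \omega$ and consider $\Omega_1 := \Omega \setminus \overline{\omega_0}$, a bounded $C^2$ domain whose boundary has the two smooth components $\partial\Omega$ and $\partial\omega_0$. The core task is to produce $\tilde\eta \in C^2(\overline{\Omega_1})$ satisfying $\tilde\eta = 0$ on $\partial\Omega$, $\tilde\eta > 0$ on $\Omega_1 \cup \partial\omega_0$, and $|\nabla \tilde\eta| > 0$ on $\overline{\Omega_1}$. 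Once this is available, one simply extends $\tilde\eta$ to $\overline{\omega_0}$ by any strictly positive $C^2$ function matching the trace of $\tilde\eta$ and its first derivatives on $\partial\omega_0$ (for instance, via a Whitney-type extension combined with a convex combination with a large positive constant). What happens inside $\omega_0$ is irrelevant for the statement.

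For the construction of $\tilde\eta$, a convenient starting point is the torsion function $u$, defined by $-\Delta u = 1$ in $\Omega$ with $u = 0$ on $\partial\Omega$. The maximum principle gives $u > 0$ in $\Omega$, and the Hopf lemma yields $|\nabla u| > 0$ on $\partial\Omega$, so the critical set $\Sigma_u := \{x \in \Omega : \nabla u(x) = 0\}$ is a compact subset of $\Omega$. Next, one uses Sard's theorem (and a small $C^2$ perturbation of $u$ supported away from $\partial\Omega$) to replace $u$ by a Morse function $u_1$ whose critical points are finite in number and non-degenerate, while preserving positivity and the boundary trace. Since $\Omega$ is connected, any finite set of interior points can be carried into $\omega_0$ by a diffeomorphism $\Psi$ of $\overline{\Omega}$ isotopic to the identity and equal to the identity in a neighborhood of $\partial\Omega$. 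Setting $\tilde\eta := u_1 \circ \Psi^{-1}$ then yields a $C^2$ function with the same boundary trace and positivity as $u_1$, whose critical set lies entirely in $\omega_0$; restricted to $\overline{\Omega_1}$, it satisfies $|\nabla \tilde\eta| > 0$.

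The main obstacle is the concentration of critical points inside $\omega_0$. Any smooth nonnegative function vanishing on $\partial\Omega$ and positive inside must attain an interior maximum, so critical points cannot be avoided globally; the content of the lemma is that they can be \emph{localized} in the prescribed open set. This is where the topological hypothesis that $\Omega$ is a bounded connected open set becomes essential: connectedness ensures the existence of the isotopy $\Psi$ transporting the finite critical set of $u_1$ through $\Omega$ into $\omega_0$ without touching the boundary. Regularity of $\partial\Omega$ is used both to apply the Hopf lemma and to guarantee that $\Omega_1$ is a $C^2$ domain so that the resulting $\eta^0 \in C^2(\overline{\Omega})$.
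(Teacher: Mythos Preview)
The paper does not prove this lemma; it simply attributes it to Fursikov and Imanuvilov~\cite{F-II} and moves on. Your proposal therefore supplies what the paper omits, and the core of your argument---torsion function, Hopf lemma to rule out critical points on $\partial\Omega$, a small compactly supported perturbation to make the function Morse with finitely many interior critical points, and then an ambient isotopy of $\overline\Omega$ (equal to the identity near $\partial\Omega$) transporting those critical points into $\omega_0$---is correct and is essentially the classical construction one finds in~\cite{F-II}.

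Two minor remarks on the write-up. First, the two-stage plan (build $\tilde\eta$ on $\Omega_1=\Omega\setminus\overline{\omega_0}$, then extend across $\omega_0$) is unnecessary: the function $u_1\circ\Psi^{-1}$ is already defined and $C^2$ on all of $\overline\Omega$, positive in $\Omega$, zero on $\partial\Omega$, with its critical set equal to $\Psi(\mathrm{Crit}(u_1))\subset\omega_0$. No separate extension is needed, which is fortunate, because your proposed extension matches only the trace and first derivatives on $\partial\omega_0$ and would in general yield only a $C^1$ global function, not a $C^2$ one. Second, you introduce an intermediate set $\omega_1$ with $\omega_0\Subset\omega_1\Subset\omega$ but never use it; it can be dropped.
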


   Let $\tau = \tau(t)$ be a function satisfying
   \[
\tau\in C^{\infty}([0,T]), \ \ \tau > 0 \ \text{ in } \ (0,T), \ \ \tau(t) =
\left\{
\begin{array}{ll}
\displaystyle t  & \ \text{ if } \ t \leq \dfrac{T}{4}, \\
\noalign{\smallskip}
\displaystyle T - t & \ \text{ if } \ t \geq \dfrac{3T}{4}
\end{array}
\right.
   \]
and let us introduce the weights
   \begin{equation}
\left\{
\begin{array}{l}
\displaystyle \alpha(x,t):=\dfrac{e^{\lambda (\Vert\eta^{0}\Vert_{\infty} + m_{0} + 1)} -e^{\lambda(\eta^{0}(x) + m_{0})}}{\tau(t)^{8}}, \ \ \displaystyle \xi(x,t):=\frac{e^{\lambda(\eta^{0}(x) + m_{0})}}{\tau(t)^{8}}, \\
\noalign{\smallskip}
\displaystyle \rho(x,t):= e^{s \alpha(x, t)},
\ \ \overline{\rho}(t):= \exp \; (s \max_{x\in \ \overline{\Omega}}\alpha(x, t)),
\ \ \displaystyle \bar{\xi}(t):= \max_{ x \in \overline{\Omega}} \xi(x,t),
\end{array}
\right.\label{2.12}%
   \end{equation}
where $\lambda, \; s > 0$ are real numbers and the constant $m_{0} > 0$ is fixed, sufficiently large to have
   $$
|\alpha_{t}| \leq C\xi^{9/8}, \; \ |\alpha_{tt}| \leq C\xi^{5/4} \quad \forall\lambda > 0.
   $$
   It is then easy to prove that there exists $\lambda_{00} > 0$ such that, for any $\lambda \geq \lambda_{00},$ one has
   \begin{equation} \label{2.13}
\max_{x\in \ \overline{\Omega}}\alpha_{0}(x) \leq 2\min_{x\in \ \overline{\Omega}}\alpha_{0}(x).
   \end{equation}
   In the sequel, we will always take $\lambda \geq \lambda_{00}$.

   The following global Carleman estimate holds for the solutions to \eqref{2.1b}:

\begin{lemma} {Let us assume that $\Phi_0$ is a nonnegative function in $C^1([0,T])$, with $\|\Phi_0\|_{C^1([0,T])} \leq M$. There exist positive constants \ $\lambda_{0}, \; s_{0}$ and $C$ depending on $\Omega, \ \omega, \ T, \ \nu, \ c_{\nu},$ and $M$ such that, for any $s \geq s_{0}$ and $\lambda \geq \lambda_{0},$ any $F \in \; L^2(Q)$ and any $\varphi_{T} \in H,$ the associated solution to \emph{(\ref{2.1b})} satisfies}\label{L2}
\begin{align}\label{2.14}
\iint_Q{\overline{\rho}}^{\ -2} (s & \overline{\xi})^{-1} ( |\varphi_{t}|^2 + |\nabla \pi|^2 )\,dx\,dt + \iint_Q \rho^{-2} (s \xi)  |\nabla \times \varphi|^2 \,dx \,dt \nonumber \\
  + \iint_Q {\rho}^{-2}&((s \xi)^{-1}|\Delta \varphi|^2 + |\nabla \varphi|^2 + (s \xi)^2|\varphi|^2) \,dx\,dt \nonumber \\
 & \leq C \Biggl( \iint_Q \rho^{-2} |F|^2 \,dx\,dt + \iint_{\omega \times (0,T)} \rho^{-2}(s\xi)^3 |\varphi|^2 \,dx\,dt \Biggr).
\end{align}
\end{lemma}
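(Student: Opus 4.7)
My strategy follows the Fursikov--Imanuvilov/Imanuvilov--Puel template, with minor adaptations to accommodate the time-dependent coefficient $\mu(t) := \nu + c_{\nu}\Phi_{0}(t)$. First, I would view \eqref{2.1b} component-wise: each Cartesian component $\varphi_i$ solves the scalar backward heat equation $-\partial_{t}\varphi_{i} - \mu(t)\Delta\varphi_{i} = -\partial_{i}\pi + F_{i}$ in $Q$ with $\varphi_{i}=0$ on $\Sigma$. Applying the classical scalar heat Carleman estimate with weight $e^{-s\alpha}$ and observation set $\omega_{0}\Subset\omega$ yields bounds for $|\Delta\varphi|^{2}$, $|\nabla\varphi|^{2}$ and $|\varphi|^{2}$ with the stated weights, at the price of a ``bad'' term $C\iint_{Q}\rho^{-2}|\nabla\pi|^{2}$ on the right-hand side, together with a local $L^{2}(\omega_{0})$-term in $\varphi$ weighted by $\rho^{-2}(s\xi)^{3}$.

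The next task is to absorb the pressure contribution. Taking the divergence of the PDE, $\pi$ solves $\Delta\pi = \nabla\cdot F$ in $\Omega$ with a Neumann condition inherited from the equation on $\Sigma$. Because $\pi$ is globally determined in space (up to an additive constant, fixed by zero mean), the natural Carleman weight for $\pi$ involves the spatially-maximal $\bar{\xi}(t)$ rather than $\xi(x,t)$. Weighted elliptic estimates of Fern\'{a}ndez-Cara--Guerrero type, combined with the equation, give a bound $\iint_{Q}\bar{\rho}^{-2}(s\bar{\xi})^{-1}|\nabla\pi|^{2} \leq C\iint_{Q}\rho^{-2}|F|^{2} + (\text{terms already controlled by the LHS with small prefactors})$, which is fed back into the component estimate and absorbed for $s\geq s_{0}$ large.

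The stronger bound on $|\nabla\times\varphi|^{2}$ with the improved weight $\rho^{-2}(s\xi)$ (one power of $(s\xi)$ better than $|\nabla\varphi|^{2}$) follows from a separate application of the heat Carleman to the vorticity $w := \nabla\times\varphi$, which satisfies $-w_{t} - \mu(t)\Delta w = \nabla\times F$ in the weak sense. The ensuing local term on $\omega_{0}$ in $|w|^{2}$ is converted into a local term on $\omega$ in $|\varphi|^{2}$ via a standard cutoff/integration-by-parts trick: take $\theta\in C_{c}^{\infty}(\omega)$ with $\theta\equiv 1$ on $\omega_{0}$, write $\theta\rho^{-2}(s\xi)^{3}|w|^{2} = \theta\rho^{-2}(s\xi)^{3} w\cdot(\nabla\times\varphi)$, and integrate by parts to transfer the curl off $\varphi$, using Young's inequality to reabsorb the resulting $\nabla w$-terms. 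Finally, $\varphi_{t}$ is reconstructed directly from the PDE and the bounds already established on $\Delta\varphi$ and $\nabla\pi$.

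The main obstacle is propagating the dependence on $\|\Phi_{0}\|_{C^{1}([0,T])}\leq M$ throughout the argument. The conjugated operator $e^{-s\alpha}(\partial_{t} + \mu\Delta)e^{s\alpha}$ produces commutator terms involving $\mu(t)$ and $\mu'(t)$, which are uniformly bounded by $M$-dependent constants times weights of size $\xi^{9/8}$ or $\xi^{5/4}$ (precisely the bounds on $|\alpha_{t}|,|\alpha_{tt}|$ built into the weights by construction). These contributions are lower order in the large parameters $s,\lambda$, so they are absorbed by choosing $s_{0}=s_{0}(M)$ and $\lambda_{0}=\lambda_{0}(M)$ sufficiently large. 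All resulting constants thus depend only on $\Omega,\omega,T,\nu,c_{\nu},M$, as required.
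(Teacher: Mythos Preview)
The paper does not actually prove this lemma: it simply cites Imanuvilov--Puel--Yamamoto~\cite{I-P1,I-P}. So there is no detailed argument in the paper to compare against; what matters is whether your sketch reproduces the estimate.

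Your outline contains a genuine gap, in the pressure--absorption step. If you apply the scalar heat Carleman componentwise to $-\partial_t\varphi_i-\mu(t)\Delta\varphi_i=F_i-\partial_i\pi$, the source term on the right is $\iint_Q\rho^{-2}|\nabla\pi|^2$, with the \emph{pointwise} weight $\rho^{-2}(x,t)=e^{-2s\alpha(x,t)}$. The elliptic/pressure estimate you invoke only controls $\iint_Q\bar\rho^{-2}(s\bar\xi)^{-1}|\nabla\pi|^2$, which carries the \emph{spatial minimum} of $\rho^{-2}$ and an extra factor $(s\bar\xi)^{-1}$. Since $\rho^{-2}\geq\bar\rho^{-2}\geq\bar\rho^{-2}(s\bar\xi)^{-1}$, your bound is on a strictly smaller quantity and cannot absorb the bad term; the discrepancy $e^{2s(\max_x\alpha-\min_x\alpha)}$ blows up as $s\to\infty$, so no choice of $s_0$ fixes this. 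This is exactly why the standard Stokes Carleman proofs do \emph{not} proceed componentwise.

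The route taken in the cited references (and implicit in your third paragraph, but mis-positioned as an ``add-on'') is to start from the vorticity: set $w=\nabla\times\varphi$, which satisfies $-w_t-\mu(t)\Delta w=\nabla\times F$ with \emph{no pressure}. Apply the heat Carleman to $w$ to obtain weighted bounds on $|w|^2,|\nabla w|^2,|\Delta w|^2$. Then recover $\varphi$ from $w$ via the div--curl system $\nabla\cdot\varphi=0$, $\nabla\times\varphi=w$, $\varphi|_{\partial\Omega}=0$ (equivalently $-\Delta\varphi=\nabla\times w$), which yields the weighted bounds on $|\varphi|^2,|\nabla\varphi|^2,|\Delta\varphi|^2$. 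Only \emph{after} that do you read off $\nabla\pi$ and $\varphi_t$ directly from the equation, and the local $w$--terms on $\omega_0$ are traded for local $\varphi$--terms on $\omega$ by the cutoff argument you describe. Your treatment of the $M$--dependence and the commutator terms is fine; the fix is purely in the order of the argument.
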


   The proof is given in~\cite{I-P1}; see also~\cite{I-P}.

   By combining the previous lemma and appropriate energy estimates, we can deduce a second family of Carleman inequalities, with weights that do not vanish at $t=0$.
   More precisely, let us now introduce the function
   \[
\ell(t) := \left\{
\begin{array}{ll}
    \displaystyle \tau\Bigl(\dfrac{T}{2}\Bigr) = \dfrac{T}{2} \ \ & \mbox{ for } \ \displaystyle 0\leq t \leq \dfrac{T}{2} , \\
    \noalign{\smallskip}
    \displaystyle \tau(t) = T-t \ \ & \mbox{ for } \ \displaystyle\dfrac{3T}{4} \leq t \leq T
\end{array}
\right.
   \]
and the associated weights
   \begin{equation}
\left\{
\begin{array}{l}
\displaystyle\tilde{\alpha}(x,t):=\dfrac{e^{\lambda (\Vert\eta^{0}\Vert_{\infty} + m_{0} + 1)} -e^{\lambda(\eta^{0}(x) + m_{0})}}{\ell(t)^{8}}, \ \ \displaystyle \tilde{\xi}(x,t):=\dfrac{e^{\lambda(\eta^{0}(x) + m_{0})}}{\ell(t)^{8}}, \\
\noalign{\smallskip}
\displaystyle\tilde{\rho}(x,t):= e^{s \tilde{\alpha}(x, t)}, \ \rho_{\ast}(t):= \exp(s \max_{x \in \overline{\Omega}}\tilde{\alpha}(x, t)), \ \displaystyle \xi_{\ast}(t):= \max_{x \in \overline{\Omega}} \tilde{\xi}(x,t).
\end{array}
\right.  \label{2.15}%
   \end{equation}

   Then the functions $\tilde{\xi}, \; \tilde{\rho}, \; \rho_{\ast}$ and $ \xi_{\ast} $ are strictly positive and bounded from below in any set of the form $\overline{\Omega}\times[0, T-\delta]$ with $\delta > 0$.

\begin{lemma}{Let the assumptions in Lemma \textsc{\ref{L2}} be satisfied. There exist positive constants $\lambda_{0}, \ s_{0}$ and $C$ depending on $\Omega$, $\omega$, $T$, $\nu$, $c_{\nu}$ and $M$ such that, for any $s \geq s_{0}$ and $\lambda \geq \lambda_{0},$ any $F$ $\in \; L^2(Q)$ and any $\varphi_{T} \in \; H,$ the associated solution to \emph{\eqref{2.1b}} satisfies}\label{L3}
\begin{align}\label{2.16}
\iint_Q \rho_{\ast}^{-2} (s&\xi_{\ast})^{-1} ( |\varphi_{t}|^2 + |\nabla \pi|^2 ) \,dx\,dt + \iint_Q \tilde{\rho}^{-2} (s \tilde{\xi})  |\nabla \times \varphi|^2 \,dx\,dt \nonumber \\
 + \iint_Q \tilde{\rho}^{-2}& \biggl((s \tilde{\xi})^{-1}|\Delta \varphi|^2 + |\nabla \varphi|^2 + (s \tilde{\xi})^2|\varphi|^2 \biggr)\,dx\,dt  \\
 & \leq C \Biggl( \iint_Q \tilde{\rho}^{ \ -2} |F|^2 \,dx\,dt + \iint_{\omega \times (0,T)} \tilde{\rho}^{ \ -2}(s \tilde{\xi})^3 |\varphi|^2 \,dx\,dt \Biggr). \nonumber
\end{align}
\end{lemma}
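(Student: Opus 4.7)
The plan is to deduce (2.16) from the Carleman estimate of Lemma~\ref{L2} by splitting $[0,T]$ at $t=T/2$. On $[T/2,T]$ the two weight families are comparable up to constants, so (2.14) directly yields the corresponding piece of (2.16). On $[0,T/2]$, the new weights $\tilde\rho,\rho_\ast,\tilde\xi,\xi_\ast$ are bounded above and below by positive constants, so I will rely on standard maximal regularity for the backward Stokes system applied to a time cut-off of $(\varphi,\pi)$, and then absorb the resulting source and residual terms into the RHS of (2.16).

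First I would carry out the weight comparison. Since $\ell(t)=\tau(t)=T-t$ on $[3T/4,T]$ and both $\tau$ and $\ell$ are smooth and bounded below on $[T/4,3T/4]$, the two weight families are comparable on $[T/2,T]$; in particular $\rho^{-2}$ and $\rho^{-2}(s\xi)^3$ are bounded by constant multiples of $\tilde\rho^{-2}$ and $\tilde\rho^{-2}(s\tilde\xi)^3$ there. On $[0,T/2]$ one has $\tau(t)\leq\ell(t)=T/2$, so $\alpha\geq\tilde\alpha$ and $\rho^{-1}\leq\tilde\rho^{-1}$; since $\rho^{-2}(s\xi)^3$ vanishes exponentially at $t=0$ while $\tilde\rho^{-2}(s\tilde\xi)^3$ stays bounded between positive constants, the same domination persists. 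Hence the RHS of (2.14) is majorized by a constant times the RHS of (2.16), and (2.14) already controls the LHS of (2.16) restricted to $\Omega\times(T/2,T)$.

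For the $[0,T/2]$ portion of (2.16), I would pick a cut-off $\theta\in C^\infty([0,T])$ with $\theta\equiv 1$ on $[0,T/2]$ and $\theta\equiv 0$ on $[3T/4,T]$, and set $(\psi,\sigma):=(\theta\varphi,\theta\pi)$. A direct computation shows $(\psi,\sigma)$ satisfies
\begin{equation*}
-\psi_t-(\nu+c_\nu\Phi_0)\Delta\psi+\nabla\sigma=\theta F-\theta'\varphi,\quad \nabla\cdot\psi=0,\quad \psi|_\Sigma=0,\quad \psi(\cdot,T)=0.
\end{equation*}
Multiplying by $A\psi$ and integrating in time (using $\|\Phi_0\|_{C^1}\leq M$ and recovering the pressure via $\nabla\sigma=(I-P)(\mu\Delta\psi+\theta F-\theta'\varphi)$) yields
\begin{equation*}
\|\psi\|_{L^2(0,T;D(A))}^2+\|\psi_t\|_{L^2(0,T;H)}^2+\|\nabla\sigma\|_{L^2(0,T;L^2)}^2\leq C\bigl(\|\theta F\|_{L^2(Q)}^2+\|\theta'\varphi\|_{L^2(Q)}^2\bigr).
\end{equation*}
Since $(\psi,\sigma)\equiv(\varphi,\pi)$ on $[0,T/2]$ and the weights in (2.16) are bounded there, the left-hand side above dominates the $[0,T/2]$ contribution to the LHS of (2.16).

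To finish, I would absorb the right-hand side of the previous display into the RHS of (2.16). Both $\theta F$ and $\theta'\varphi$ are supported in $[0,3T/4]$, where $\tilde\rho^{-2}$ and $\tilde\rho^{-2}(s\tilde\xi)^2$ are bounded below, so
\begin{equation*}
\|\theta F\|_{L^2(Q)}^2\leq C\iint_Q\tilde\rho^{-2}|F|^2\,dx\,dt,\qquad \|\theta'\varphi\|_{L^2(Q)}^2\leq C\iint_{(T/2,3T/4)\times\Omega}\tilde\rho^{-2}(s\tilde\xi)^2|\varphi|^2\,dx\,dt,
\end{equation*}
and the latter sits inside the $[T/2,T]$ portion of the LHS of (2.16), which was already controlled by (2.14) in the second paragraph. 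Summing the two halves of the time interval delivers (2.16). The main technical obstacle will be keeping track of the weight comparisons near $t=0$ and checking that the $(s\tilde\xi)^{-1}|\Delta\varphi|^2$ and $(s\xi_\ast)^{-1}|\nabla\pi|^2$ terms on $[0,T/2]$ are indeed captured by the $A\psi$-multiplier estimate; both follow from standard elliptic regularity for the Stokes operator.
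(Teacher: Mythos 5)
Your proposal is correct and follows essentially the same route as the paper: split the estimate at $t=T/2$, use the comparability of the two weight families on $[T/2,T]$ to invoke Lemma~\ref{L2} there, and control the $[0,T/2]$ piece by energy/regularity estimates for the backward Stokes system that transfer information from $[T/2,3T/4]$ (already covered by the Carleman inequality). The only cosmetic difference is that you package the backward estimates via a time cut-off and a single $A\psi$-multiplier bound, whereas the paper obtains the bounds on $\|\varphi\|$, $\|\nabla\varphi\|$, $\|\Delta\varphi\|$ and $\|\varphi_t\|$ successively and averages over an intermediate time $t_2\in[T/2,3T/4]$; both yield the same commutator-type term supported in $[T/2,3T/4]$.
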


\begin{proof} It is a consequence of \eqref{2.14} and the usual energy estimates for $\varphi.$ Since the main ideas are known and can be found in many papers, we will only give a sketch.

Let us take $\lambda \geq \lambda_{0}$ and $ s \geq s_{0} $ and let us set
\begin{align}
 \Gamma (\varphi, \pi; a, b)&:=  \iint_{\Omega\times(a,b)} \rho_{\ast}^{-2} (s \xi_{\ast})^{-1} ( |\varphi_{t}|^2 + |\nabla \pi|^2 ) \,dx\,dt \nonumber\\
+ \ &\iint_{\Omega\times(a,b)} \tilde{\rho}^{\ -2} \bigl((s \tilde{\xi})^{-1}|\Delta \varphi|^2 + |\nabla \varphi|^2 + (s \tilde{\xi})^2|\varphi|^2 \bigr)\,dx\,dt \nonumber \\
+ & \iint_{\Omega\times(a,b)} \tilde{\rho}^{\ -2} (s \tilde{\xi})  |\nabla \times \varphi|^2 \,dx\,dt
\end{align}
and
\begin{align}
S(\varphi; F) := & \iint_Q \tilde{\rho}^{\ -2} |F|^2 \,dx\,dt + \iint_{\omega \times (0,T)} \tilde{\rho}^{\ -2}(s\tilde{\xi})^3 |\varphi|^2 \,dx\,dt. \nonumber
\end{align}


Then $\Gamma (\varphi, \pi; 0, T) = \Gamma (\varphi, \pi; 0, T/2) + \Gamma (\varphi, \pi; T/2, T)$ and, from Lemma \ref{L2}, we clearly have
\begin{equation}\label{2.13a}
  \Gamma (\varphi,\pi; T/2, T) \leq C S(\varphi; F).
\end{equation}

We must prove that
$$ \Gamma(\varphi,\pi; 0, T/2) \leq C (S(\varphi; F)$$
and we know that
$$ \Gamma (\varphi,\pi; 0, T/2) \leq C\int_{0}^{T/2} ( \| \varphi_{t} \|^2 + \| \Delta\varphi \|^2 + \| \nabla \varphi \|^2 + \| \varphi \|^2 )\,dt. $$
Let us check that this integral can be bounded as in \eqref{2.13a}.

From (\ref{2.1b}), it is readily seen that
$$  -\frac{1}{2}\frac{d}{dt} \|\varphi\|^2  + \mu \|\nabla\varphi\|^2  \leq \frac{\mu}{2} \|\nabla \varphi\|^2 + C\|F\|^2  \; \ \ \mbox{in} \; \ \ (0,T), $$
whence
\begin{align}\label{2.15}
&\|\varphi(\cdot, t_{1}) \|^2  + \int_{t_{1}}^{t_{2}} \|\nabla\varphi(\cdot, s) \|^2 \ ds \leq  \|\varphi(\cdot, t_{2}) \|^2 + C  \int_{t_{1}}^{t_{2}} (\| F (\cdot, s) \|^2  \nonumber
\end{align}
for all $ 0 \leq t_{1} \leq t_{2} \leq T.$ In particular, taking $t_{1} \in [0, T/2]$ and $ t_{2} \in [T/2, 3T/4]$ and integrating with respect to $t_{1}$ and then with respect to $t_{2}$, we see that
\begin{align}
\vspace{.2cm}
  \displaystyle \int_{0}^{T/2} \|\varphi(\cdot, t)\|^2 \,dt & \leq C \biggl(\int_{T/2}^{3T/4}\|\varphi(\cdot, s)\|^2 \,ds +  \int_{0}^{3T/4} \|F(\cdot, s) \|^2 \, ds \biggr) \nonumber \\
 & \leq C S(\varphi; F) \nonumber.
\end{align}

Also,
\begin{center}
$ \displaystyle \int_{0}^{T/2} \|\nabla \varphi(\cdot, t)\|^2 \ dt \leq  \|\varphi(\cdot, t_{2}) \|^2 \  + C \int_{0}^{3T/4} \|F(\cdot, s) \|^2  \; ds , $ \\
\end{center}
for all $t_{2} \in [T/2 , 3T/4]$ and, integrating with respect to $t_{2}$ in this interval, we deduce an estimate of the integral of $\|\nabla \varphi\|^2$ in $(0, \, T/2)$:
\begin{align}
\vspace{.2cm}
 \displaystyle \int_{0}^{T/2} \|\nabla \varphi(\cdot, t)\|^2 \,dt & \leq  C \biggl( \int_{T/2}^{3T/4} \|\varphi(\cdot, s) \|^2 \,ds +  \int_{0}^{3T/4} \|F(\cdot, s) \|^2  \; ds \biggr) \nonumber \\
& \leq C S(\varphi; F) .  \nonumber
\end{align}
A similar argument holds for the integral of $\|\Delta\varphi \|^2$.  Indeed, one has
$$  -\frac{1}{2}\frac{d}{dt} \|\nabla\varphi\|^2  + \mu \|\Delta\varphi\|^2 \ = \ (F, - \Delta \varphi) \ \leq \ \frac{\mu}{2} \|\Delta\varphi\|^2 + C\|F\|^2 \ \ \; \ \mbox{in} \ \ \; \ (0,T). $$
Arguing as before, we see that
\begin{center}
$ \displaystyle \int_{0}^{T/2} \|\Delta\varphi(\cdot, t)\|^2 \,dt \leq   \|\nabla\varphi(\cdot, t_{2}) \|^2 + C \int_{0}^{3T/4} \|F(\cdot, s) \|^2 \; ds$
\end{center}
for all $t_{2} \; \in \; [T/2, 3T/4]$ and, after integration with respect to $t_{2}$, we see that
\begin{align}
\displaystyle \int_{0}^{T/2} \|\Delta\varphi(\cdot, t)\|^2 \,dt & \leq  C \biggl( \int_{T/2}^{3T/4} \|\nabla\varphi(\cdot, s) \|^2 \; ds + \int_{0}^{3T/4} \|F(\cdot, s) \|^2 \; ds \biggr) \nonumber \\
& \leq C S(\varphi; F) \nonumber .
\end{align}
Finally, using that $\|\varphi_{t}\|^2 = (\mu\Delta\varphi + F, - \varphi_{t} )$, we deduce that
   $$
\int_{0}^{T/2} \|\varphi_{t}\|^2  \,dt \leq C S (\varphi; F).
   $$

   As a consequence, $ \Gamma (\varphi,\pi; 0, T/2) \leq C S(\varphi; F)$ and the proof is done.
\end{proof}

   From now on, we will fix $\lambda$ and $s$ as in Lemma \ref{L3} and we will consider the corresponding functions $\alpha, \ \tilde{\alpha}, \ \rho, \ \xi$, etc. given by (\ref{2.12}) and (\ref{2.14}). Also, we will introduce the function
\begin{equation}\label{2.21}
\tilde{\eta} := \tilde{\rho} \; \tilde{\xi}^{-1}.
\end{equation}

An immediate consequence of Lemma \ref{L3} and this definition is the following:

\begin{corollary}{ There exist positive constants $\lambda, \; s$ and $C$ only depending on $\Omega,$ $\omega,$ $ T $, $\nu$, $c_{\nu}$ and $M$  such that, for any $F\in \; L^2(Q)$ and any $\varphi_{T} \in \; H,$ the corresponding solution to \emph{(\ref{2.1b})} satisfies}
\begin{align}\label{2.22}
&\iint_{Q} \tilde{\eta}^{-2} |\varphi|^2  \,dx \,dt \ \leq \ C \Biggl( \iint_Q \tilde{\rho}^{-2} |F|^2 \,dx \,dt + \iint_{\omega \times (0,T)} \tilde{\eta}^{-2} |\varphi|^2 \,dx\,dt \Biggr). \nonumber
\end{align}
\end{corollary}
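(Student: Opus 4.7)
The plan is to derive this corollary as a direct repackaging of Lemma~\ref{L3}. Since $\lambda$ and $s$ have now been fixed, any multiplicative factor depending on them can be absorbed into the generic constant $C$. Using the definition $\tilde{\eta} := \tilde{\rho}\tilde{\xi}^{-1}$, we have $\tilde{\eta}^{-2} = \tilde{\rho}^{-2}\tilde{\xi}^{2}$, and therefore
$$
\iint_Q \tilde{\eta}^{-2}|\varphi|^2\,dx\,dt = s^{-2}\iint_Q \tilde{\rho}^{-2}(s\tilde{\xi})^{2}|\varphi|^2\,dx\,dt,
$$
which coincides, up to the fixed factor $s^{-2}$, with one of the nonnegative terms appearing on the left-hand side of \eqref{2.16}.

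The first step of the proof would then be to apply Lemma~\ref{L3} and discard all the remaining nonnegative contributions on the left-hand side of \eqref{2.16}, namely those involving $\varphi_t$, $\nabla\pi$, $\Delta\varphi$, $\nabla\varphi$ and $\nabla\times\varphi$. After absorbing the factor $s^{-2}$ into $C$, this yields
$$
\iint_Q \tilde{\eta}^{-2}|\varphi|^2\,dx\,dt \leq C\left( \iint_Q \tilde{\rho}^{-2}|F|^2\,dx\,dt + \iint_{\omega\times(0,T)} \tilde{\rho}^{-2}(s\tilde{\xi})^{3}|\varphi|^2\,dx\,dt \right).
$$
The first term on the right is already in the form required by the corollary.

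For the second term, the plan is to invoke the explicit form of the weights: with $\lambda$ and $s$ now fixed, the ratio between $\tilde{\rho}^{-2}(s\tilde{\xi})^{3}$ and $\tilde{\eta}^{-2} = \tilde{\rho}^{-2}\tilde{\xi}^{2}$ on $\omega\times(0,T)$ can be estimated using the explicit formulas \eqref{2.15} for $\tilde{\alpha}$ and $\tilde{\xi}$, so that the excess factor is controlled by a constant that may be absorbed into $C$. The only substantive ingredient is Lemma~\ref{L3}; the role of the corollary is simply to distill from it a weighted $L^2$ observability-type inequality for $\varphi$ alone, in which all auxiliary quantities (the pressure gradient and the derivatives of $\varphi$) have been discarded. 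This is precisely the form that will be needed in the duality/controllability argument of Section~3. The main delicacy lies in verifying the weight comparison on the local term rather than in any new analytic content.
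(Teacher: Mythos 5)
Your first step is sound, and it is essentially all the paper itself offers (the text merely declares the corollary ``an immediate consequence'' of Lemma~\ref{L3} and the definition \eqref{2.21}): since $\tilde{\eta}^{-2}=\tilde{\rho}^{-2}\tilde{\xi}^{2}=s^{-2}\,\tilde{\rho}^{-2}(s\tilde{\xi})^{2}$, the left-hand side of the corollary is dominated by the left-hand side of \eqref{2.16} after discarding the other nonnegative terms, and the factor $s^{-2}$ is harmless once $s$ is fixed. The gap is in your treatment of the local term. You claim that the ratio between $\tilde{\rho}^{-2}(s\tilde{\xi})^{3}$ and $\tilde{\eta}^{-2}=\tilde{\rho}^{-2}\tilde{\xi}^{2}$ on $\omega\times(0,T)$ is controlled by a constant; that ratio equals $s^{3}\tilde{\xi}$, and $\tilde{\xi}(x,t)=e^{\lambda(\eta^{0}(x)+m_{0})}/\ell(t)^{8}$ blows up like $(T-t)^{-8}$ as $t\to T^{-}$ because $\ell(t)=T-t$ there. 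With $\lambda$ and $s$ fixed this factor is still an unbounded function of $t$, so it cannot be absorbed into $C$. The pointwise comparison that does hold is the opposite one, namely $\tilde{\eta}^{-2}\le C\,\tilde{\rho}^{-2}(s\tilde{\xi})^{3}$ (because $\tilde{\xi}$ is bounded \emph{below} by a positive constant), and that only allows you to \emph{enlarge} the observation term of \eqref{2.16}, not to shrink it to $\tilde{\eta}^{-2}$.

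Consequently, what your argument actually establishes is the weaker statement in which the local term is $\iint_{\omega\times(0,T)}\tilde{\rho}^{-2}(s\tilde{\xi})^{3}|\varphi|^{2}\,dx\,dt$. The corollary as written, with the strictly smaller weight $\tilde{\eta}^{-2}$ in the observation term, is a stronger inequality and does not follow from Lemma~\ref{L3} by a weight comparison alone; one would need an additional ingredient (for instance a local bootstrap argument near $t=T$ to lower the power of $\tilde{\xi}$ in the local term, or a redefinition of $\tilde{\eta}$ as $\tilde{\rho}\,(s\tilde{\xi})^{-3/2}$ so that $\tilde{\eta}^{-2}$ coincides with the Carleman observation weight). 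Since the paper supplies no proof beyond the word ``immediate'', you have not diverged from its route; but the step you yourself identify as ``the main delicacy'' is precisely the one that fails, so the proposal cannot be accepted as written.
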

\noindent

\subsection{ The null controllability of the linear system (\ref{2.1a})}
\noindent

 We will have to impose some specific conditions to $f$ and $v_{0}$ in order to drive the solution to (\ref{2.1a}) exactly to zero. The following result holds:

\begin{proposition}{Let us assume that $v_{0} \in  H \ \mbox{and} \ \  \tilde{\eta}f  \in L^2(Q)^N $.
Then, we can find control-states $(v, q, u) $ for $\emph{(\ref{2.1a})}$ satisfying}\label{P1}
\begin{equation}\label{2.23}
  \ v \in L^2(0,T;V)\cap C^0([0,T]; H), \; \ u \in L^2(\omega \times (0, T))^N
\end{equation}
and
\begin{align}\label{2.24}
&\iint_{Q} \tilde{\rho}^{2} |v|^2  \,dx\,dt \ + \ \iint_{\omega \times (0,T)} \tilde{\eta}^{ 2} |u|^2 \,dx\,dt \ < \ + \infty.
\end{align}
In particular, one has \textsc{(\ref{1.1.1})}.
\end{proposition}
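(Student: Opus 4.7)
The plan is to follow the variational approach of Fursikov and Imanuvilov, building the control from a minimizer of a weighted quadratic functional whose coercivity is guaranteed by the Carleman observability inequalities of Section~2.1.

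Concretely, I would introduce the space
\[
P_0 := \{\varphi \in C^2(\overline{Q})^N : \nabla\cdot\varphi = 0 \text{ in } Q,\ \varphi|_\Sigma = 0\},
\]
and, on $P_0$, the bilinear form
\[
a(\hat\varphi,\varphi) := \iint_Q \tilde\rho^{-2}\,L^*\hat\varphi\cdot L^*\varphi\,dx\,dt + \iint_{\omega\times(0,T)} \tilde\eta^{-2}\,\hat\varphi\cdot\varphi\,dx\,dt,
\]
where $L^*\varphi := -\varphi_t - (\nu + c_\nu\Phi_0)\Delta\varphi$, together with the linear form
\[
\ell(\varphi) := \iint_Q f\cdot\varphi\,dx\,dt + (v_0,\varphi(\cdot,0)).
\]
The Carleman estimate of Lemma~\ref{L3} (applied after projecting onto divergence-free fields to eliminate $\nabla\pi$) guarantees that $a$ is a scalar product on $P_0$, while the corollary that follows it, combined with a standard energy estimate for $\varphi$ on $[0,T/2]$ to control $\|\varphi(\cdot,0)\|$, yields the continuity bound
\[
|\ell(\varphi)| \leq C\bigl(\|\tilde\eta f\|_{L^2(Q)^N} + \|v_0\|_H\bigr)\,a(\varphi,\varphi)^{1/2}.
\]

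Let $P$ be the completion of $P_0$ under the norm $a(\cdot,\cdot)^{1/2}$. By Lax--Milgram there exists a unique $\hat\varphi\in P$ with $a(\hat\varphi,\varphi) = \ell(\varphi)$ for all $\varphi\in P$. I would then set
\[
v := \tilde\rho^{-2}\,L^*\hat\varphi,\qquad u := -\tilde\eta^{-2}\,\hat\varphi\,1_\omega,
\]
so that directly
\[
\iint_Q \tilde\rho^{2}|v|^2\,dx\,dt + \iint_{\omega\times(0,T)} \tilde\eta^{2}|u|^2\,dx\,dt = a(\hat\varphi,\hat\varphi) = \ell(\hat\varphi) < +\infty,
\]
which gives \eqref{2.24}. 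Reading the Euler--Lagrange identity $a(\hat\varphi,\varphi) = \ell(\varphi)$ against test fields $\varphi\in P_0$ and integrating by parts shows that $(v,u)$ solves \eqref{2.1a} in the very weak sense, with some associated pressure $q$ and with $v(\cdot,0)=v_0$. The null condition $v(\cdot,T) = 0$ is then forced by the integrability $\iint_Q \tilde\rho^{2}|v|^2\,dx\,dt < +\infty$, since $\tilde\rho(x,t)\to +\infty$ fast enough as $t\to T^-$.

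The main obstacle is upgrading this very weak solution to the regularity stated in \eqref{2.23}, i.e. $v\in L^2(0,T;V)\cap C^0([0,T];H)$ with a genuine pressure $q$. The remedy is that, once $u\in L^2(\omega\times(0,T))^N$ is in hand from \eqref{2.24} and the assumption $\tilde\eta f\in L^2(Q)^N$ gives $f\in L^2(Q)^N$ (the weight $\tilde\eta$ being bounded below on $\overline\Omega\times[0,T-\delta]$ for each $\delta>0$), one can insert $u\,1_\omega + f$ as a right-hand side in the non-stationary Stokes system with initial datum $v_0\in H$ and invoke the classical existence and regularity theory; uniqueness then identifies this solution with the $v$ produced by the duality argument. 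A secondary technical point is justifying that elements of the abstract completion $P$ have well-defined traces at $t=0$ so that $\ell$ extends continuously to $P$; this is precisely what the energy estimate for $\varphi$ near $t=0$ delivers, and both steps should be routine given the Carleman machinery already in place.
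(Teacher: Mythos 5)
Your proposal is correct and is essentially the paper's own argument: the paper solves the primal weighted extremal problem \eqref{2.25} ``arguing as in Fursikov--Imanuvilov'' with coercivity supplied by the Carleman estimate \eqref{2.16}, and your Lax--Milgram construction on the completion of $P_0$ is precisely the standard dual formulation by which that extremal problem is solved, including the same use of the non-degenerate weights at $t=0$ to control $\varphi(\cdot,0)$ and the same a posteriori regularity upgrade via Stokes theory. No substantive difference in route or in the key estimates used.
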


The proof relies on (\ref{2.15}) and can be easily obtained arguing as in \cite{F-II}.
The key idea is to consider the extremal problem

\begin{equation}
\left\{
\begin{array}[c]{ll}

\displaystyle
    \mbox{Minimize} \iint_{Q} \tilde{\rho}^{2} |v|^2  \,dx \,dt + \iint_{\omega \times (0,T)} \tilde{\eta}^{2} |u|^2  \,dx\,dt \\
    \\
\displaystyle
   \mbox{Subject  to} \; \  u \in L^2(\omega \times (0,T))^N, \; \; (v, q, u) \; \ \mbox{satisfies} \; \ (\ref{2.1a}).\\
\end{array}
\right.  \label{2.25}%
\end{equation}

There exists exactly one solution to \eqref{2.25} that, thanks to \eqref{2.16}, is the desired control-state and satisfies \eqref{2.23} and \eqref{2.24}.

It is also true that, in a certain sense, the solution to (\ref{2.25}) depends continuously on the data $(f, v_{0})$. In particular, if the $(f_n, v_{0n})$ satisfy
$$ \|v_{0n} - v_{0} \| \rightarrow 0  \; \ \; \ \mbox{and} \; \ \;  \iint_{Q}\tilde{\eta}^2 |f_{n} - f|^2 \,dxdt \rightarrow 0, $$
then the $v_{n}$ and $u_{n}$  furnished by Proposition \ref{P1} satisfy
\begin{equation} \label{2.26}
    \iint_{Q} \tilde{\rho}^{\ 2} |v_{n} - v|^2 \,dx\,dt \rightarrow 0  \; \ \; \ \mbox{and} \; \ \; \iint_{\omega \times (0,T)} \tilde{\eta}^{\ 2} |u_{n} - u|^2 \,dx\,dt \rightarrow 0,
\end{equation}
 where $u$ and $v$ correspond to $(f, v_{0})$ . For brevity we omit the details, that can be found in \cite{E.M3}.

\subsection{ Some additional estimates }
\noindent

The state found in Proposition \ref{P1} satisfies some additional properties that will be needed below, in Section~3. Thus, it will be shown in this section that not only $v$ but also $\nabla v,  \; \Delta v, \; \mbox{and} \ \  v_{t}$
belong to some specific weighted $L^2$~spaces.

Let us introduce the spatially homogeneous weights
\begin{equation}\label{2.27}
\hat{\rho}(t) := \exp (s \min_{x\in \ \overline{\Omega}}\tilde{\alpha}(x,t)), \ \ \zeta(t) := \hat{\rho}(t)\ell(t)^{12}
\ \mbox{ and } \ \gamma(t) := \hat{\rho}(t)\ell(t)^{33/2}.
\end{equation}
   From (\ref{2.15}), (\ref{2.21}) and (\ref{2.27}), it is easy to see that
   \begin{equation}\label{2.28}
\zeta \leq C \tilde{\eta} \ \mbox{ and } \ |\zeta \zeta_{t} |\leq C \tilde{\rho}^{\ 2}.
   \end{equation}
   
   The following results hold:

\begin{proposition} {Let the hypotheses in Proposition \textsc{\ref{P1}} be satisfied and let $(v, q, u)$ be the solution to \textsc{(\ref{2.25})}.}\label{P2}
Then
\begin{equation}
\left\{
\begin{array}{l}
\displaystyle \max_{[0,T]}  \int_{\Omega} \!\zeta^2|v|^2 \,dx \!+\! \iint_{Q} \!\zeta^2|\nabla v|^2 \,dx\,dt \!\leq\! C \biggl( \| v_{0} \|^2  \!+\! \displaystyle \iint_{Q} \!\tilde{\rho}^{\ 2}|v|^2 \,dx \;dt \\
\noalign{\smallskip}
\displaystyle
\phantom{\displaystyle \max_{[0,T]}  \int_{\Omega} \!\zeta^2|v|^2 \,dx} \!+\! \displaystyle \iint_{Q}\tilde{\eta}^2|f|^2 \,dx \;dt  + \iint_{\omega \times (0,T)}\tilde{\eta}^2|u|^2 \,dx \;dt\biggr)
\end{array}
\right.  \label{2.29}%
\end{equation}
\end{proposition}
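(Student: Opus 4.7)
The plan is to perform a weighted energy estimate for (2.1a) using the multiplier $\zeta^2 v$, where $\zeta=\zeta(t)$ is the spatially homogeneous weight introduced in (2.27). The key observation is that $\zeta$ depends only on~$t$ and $v$ is divergence‑free, so the pressure term disappears and the calculation reduces essentially to a reweighted Stokes energy estimate, with all weight manipulations controlled by the two bounds gathered in (2.28).

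I would take the $L^2(\Omega)$ inner product of the momentum equation with $\zeta^2 v$. Using $\nabla\cdot v=0$ the pressure contribution vanishes, integration by parts in the viscous term yields $\mu\,\zeta^2\|\nabla v\|^2$ (boundary terms vanish because $v|_{\partial\Omega}=0$), and for the time derivative the standard identity $\zeta^2(v_t,v)=\tfrac{1}{2}\tfrac{d}{dt}(\zeta^2\|v\|^2)-\zeta\zeta_t\|v\|^2$ yields
\begin{equation*}
\tfrac12\tfrac{d}{dt}\bigl(\zeta^2\|v\|^2\bigr)+\mu\,\zeta^2\|\nabla v\|^2=\zeta\zeta_t\|v\|^2+\zeta^2(f,v)+\zeta^2(u\mathbf{1}_\omega,v).
\end{equation*}

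The three right‑hand terms must each be dominated by the four ingredients appearing in (2.29). For the first, (2.28) gives $|\zeta\zeta_t|\le C\tilde\rho^{\,2}$, which produces directly the $\iint_Q\tilde\rho^{\,2}|v|^2$ contribution. For the source term involving $f$, I would write $\zeta^2(f,v)=(\tilde\eta f,\zeta^2\tilde\eta^{-1}v)$ and apply Young's inequality:
\begin{equation*}
\zeta^2(f,v)\le \tfrac12\|\tilde\eta f\|^2+\tfrac12\int_\Omega \zeta^4\tilde\eta^{-2}|v|^2\,dx.
\end{equation*}
Now from the first inequality of (2.28), $\zeta/\tilde\eta\le C$, so $\zeta^4/\tilde\eta^2\le C\zeta^2$; and since $\hat\rho\le \tilde\rho$ pointwise with $\ell$ bounded, one has $\zeta\le C\tilde\rho$, hence $\zeta^4/\tilde\eta^2\le C\tilde\rho^{\,2}$. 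The term $\zeta^2(u\mathbf{1}_\omega,v)$ is handled identically but localized on $\omega$, producing the $\iint_{\omega\times(0,T)}\tilde\eta^{\,2}|u|^2$ contribution and an additional $\tilde\rho^{\,2}|v|^2$ piece.

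Integrating in time from $0$ to an arbitrary $t\in[0,T]$, and noting that $\zeta(0)^2=\hat\rho(0)(T/2)^{24}$ is a harmless finite constant so that $\zeta(0)^2\|v(0)\|^2\le C\|v_0\|^2$, I obtain
\begin{equation*}
\zeta(t)^2\|v(t)\|^2+2\mu\int_0^t\zeta^2\|\nabla v\|^2\,ds\le C\Bigl(\|v_0\|^2+\iint_Q\tilde\rho^{\,2}|v|^2+\iint_Q\tilde\eta^{\,2}|f|^2+\iint_{\omega\times(0,T)}\tilde\eta^{\,2}|u|^2\Bigr),
\end{equation*}
and taking the supremum in $t$ on the left yields (2.29). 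I do not foresee any conceptual obstruction; the main care needed is the bookkeeping verifying $\zeta^4/\tilde\eta^2\le C\tilde\rho^{\,2}$ from (2.27)--(2.28), so that the dominating weight matches exactly the $\tilde\rho^{\,2}$ occurring in (2.29). Otherwise an unwanted weight would appear on $|v|^2$ and one would be forced to appeal to a Gr\"onwall argument to absorb it, which the present structure avoids.
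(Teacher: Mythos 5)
Your proposal is correct and follows essentially the same route as the paper: multiply the momentum equation by $\zeta^2 v$, use that the pressure term vanishes by incompressibility, control $\zeta\zeta_t$ and the source/control terms via the weight comparisons $|\zeta\zeta_t|\le C\tilde\rho^{\,2}$ and $\zeta\le C\tilde\eta$ from (2.28), and integrate in time. The only cosmetic slip is writing $\zeta(0)^2=\hat\rho(0)(T/2)^{24}$ instead of $\hat\rho(0)^2(T/2)^{24}$, which does not affect the argument.
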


\begin{proof} To get this result, we multiply the PDE in (\ref{2.1a}) by $\zeta^2v$ and we integrate in $\Omega$. We obtain:
$$ \int_{\Omega}\zeta^2 (v_{t} - \mu\Delta v + \nabla q)\cdot v \,dx = \int_{\Omega}\zeta^2 ( u1_{\omega} + f) \cdot v \,dx.  $$
Remember that $\mu=\nu+c_{\nu}\Phi_{0}(t)$.

In view of the inequalities in (\ref{2.28}), the following estimates hold:
\begin{align}
 \int_{\Omega}\zeta^2 u 1_{\omega}\cdot v \,dx \ & \leq \ C \biggl( \int_{\omega}\tilde{\eta}^2 |u|^2 \,dx \biggr)^{1/2}\biggl( \int_{\Omega} \zeta^4 \tilde{\eta}^{-2} |v|^2 \,dx \biggr)^{1/2} \nonumber\\ \vspace{.2cm}
 & \leq \, \ \frac{1}{2} \int_{\omega}\tilde{\eta}^2 |u|^2 \,dx + C \int_{\Omega}\tilde{\rho}^{\ 2} |v|^2 \,dx \, , \nonumber\\ \nonumber \\
 \displaystyle \int_{\Omega}\zeta^2 f \cdot v \,dx \ & \leq \ \frac{1}{2} \int_{\Omega}\tilde{\eta}^{ \ 2} |f|^2 \,dx + C \int_{\Omega}\tilde{\rho}^{\ 2} |v|^2 \,dx \, ,\nonumber\\ \nonumber \\
 \displaystyle \int_{\Omega}\zeta^2 v_{t}\cdot v \,dx & \ = \ \frac{1}{2} \frac{d}{dt} \int_{\Omega}\zeta^2 |v|^{2} \,dx - \int_{\Omega}\zeta \; \zeta_{t} |v|^2 \,dx \nonumber\\
  & \, \geq \ \frac{1}{2} \frac{d}{dt}\int_{\Omega}\zeta^2 |v|^2 \,dx - C \int_{\Omega}\tilde{\rho}^{ \ 2} |v|^2 \,dx \nonumber.
\end{align}
\\
On the other hand,
   $$
\int_{\Omega} \zeta^2\nabla q \cdot v \,dx\,dt\; = 0  \; \ \; \ \mbox{and} \; \ \; \ \int_{\Omega} \zeta^2 (-\Delta v) \cdot v \;dx\; = \int_{\Omega} \zeta^2|\nabla v|^2 \;dx.
   $$
   Therefore,
\begin{align}
\displaystyle \frac{1}{2}\frac{d}{dt}\int_{\Omega}\zeta^2|v|^2 \,dx + \mu \int_{\Omega}\zeta^2|\nabla v|^2 \,dx \leq & C \biggl( \int_{\omega}\tilde{\eta}^2|u|^2 \,dx \nonumber \\
& + \int_{\Omega}\tilde{\rho}^2|v|^2 \,dx + \int_{\Omega}\tilde{\eta}^2 |f|^2 \,dx\biggr). \nonumber
\end{align}
Now, integrating in time, the estimate in (\ref{2.29}) is easily found.
\end{proof}

\begin{proposition}\label{P3} Let the hypotheses in~Proposition~\ref{P1} be satisfied and let $(v, q, u)$ be the solution to~\eqref{2.25}. Let us assume that $v_{0} \in V$. Then one has
\begin{equation}
\left\{
\begin{array}{l}
\displaystyle \max_{[0,T]} \int_{\Omega} \gamma^2|\nabla v|^2 \;  dx + \iint_{Q}\gamma^2(|v_{t}|^2+  |\Delta v|^2)\,dx\,dt \leq C \biggl( \| v_{0} \|_{H_{0}^1}^2 \\
\noalign{\smallskip}
\displaystyle \ \ + \displaystyle \iint_{Q}\tilde{\rho}^2|v|^2 \,dx\,dt  + \displaystyle \iint_{Q}\tilde{\eta}^2|f|^2 \,dx \,dt + \iint_{\omega \times (0,T)}\tilde{\eta}^2|u|^2 \,dx \;dt\biggr).
\end{array}
\right.  \label{2.30a}%
\end{equation}
\end{proposition}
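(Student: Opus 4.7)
\medskip
\noindent\textbf{Proof plan for Proposition~\ref{P3}.}
The natural idea is to upgrade the energy estimate of Proposition~\ref{P2} by one derivative: instead of multiplying (\ref{2.1a}) by $\zeta^2 v$, I would multiply by $\gamma^2 v_t$. Since $v=0$ on $\Sigma$ implies $v_t=0$ on $\Sigma$, and $\nabla\cdot v=0$ implies $\nabla\cdot v_t=0$, the pressure term $\int_\Omega \gamma^2 \nabla q\cdot v_t\,dx$ vanishes after integration by parts, while the viscous term yields
$$
\mu\int_\Omega \gamma^2(-\Delta v)\cdot v_t\,dx = \frac{d}{dt}\Bigl(\frac{\mu}{2}\int_\Omega\gamma^2|\nabla v|^2\,dx\Bigr) - \frac{\mu_t}{2}\int_\Omega\gamma^2|\nabla v|^2\,dx - \mu\int_\Omega\gamma\gamma_t|\nabla v|^2\,dx.
$$
Applying Young's inequality to $\int_\Omega\gamma^2(u1_\omega+f)\cdot v_t\,dx$ to absorb half of $\int_\Omega\gamma^2|v_t|^2\,dx$, we arrive at a differential inequality of the form
\begin{equation*}
\int_\Omega\gamma^2|v_t|^2\,dx + \frac{d}{dt}\Bigl(\mu\int_\Omega\gamma^2|\nabla v|^2\,dx\Bigr) \le C\int_\Omega\bigl(|\gamma\gamma_t|+\gamma^2|\mu_t|\bigr)|\nabla v|^2\,dx + C\int_\Omega\gamma^2\bigl(|u1_\omega|^2+|f|^2\bigr)\,dx.
\end{equation*}

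The key technical point is to compare the new weight $\gamma=\hat\rho\,\ell^{33/2}$ with the weights already available. Since $\ell$ is bounded, one easily checks that $\gamma\le C\zeta$ and $\gamma^2\le C\tilde\eta^{\,2}$, so the forcing and control terms on the right-hand side are dominated by the quantities appearing in (\ref{2.29}). For the weight derivative, using the bound $|\tilde\alpha_t|\le C\tilde\xi^{9/8}$ (built into the choice of $m_0$), a direct computation gives $|\gamma_t|\le C\hat\rho\,\ell^{15/2}$, hence $|\gamma\gamma_t|\le C\hat\rho^2\ell^{24}=C\zeta^2$; combined with $|\mu_t|\le CM$, this shows
$$
\int_\Omega\bigl(|\gamma\gamma_t|+\gamma^2|\mu_t|\bigr)|\nabla v|^2\,dx\le C\int_\Omega\zeta^2|\nabla v|^2\,dx,
$$
which is controlled by Proposition~\ref{P2}. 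Integrating in time from $0$ to $t$, and using that $\gamma(0)$ and $\ell(0)$ are finite so that the boundary term at $t=0$ is bounded by $C\|v_0\|_{H^1_0}^2$, one obtains the $\max_{[0,T]}\int_\Omega\gamma^2|\nabla v|^2\,dx$ bound together with the estimate of $\iint_Q\gamma^2|v_t|^2\,dx\,dt$.

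It remains to estimate $\iint_Q\gamma^2|\Delta v|^2\,dx\,dt$. For this I would view (\ref{2.1a}) as a stationary Stokes problem at each time $t$,
$$
-\mu\Delta v+\nabla q = -v_t+u1_\omega+f,\qquad \nabla\cdot v=0,\qquad v|_{\partial\Omega}=0,
$$
and invoke the standard Stokes regularity estimate $\|v(t)\|_{H^2}+\|\nabla q(t)\|\le C(\|v_t(t)\|+\|u1_\omega(t)\|+\|f(t)\|)$. Multiplying by $\gamma(t)^2$ and integrating in time produces the $\Delta v$ bound from the already-established ones.

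The main obstacle I expect is bookkeeping of the exact polynomial powers of $\ell$ (the exponent $33/2$ in $\gamma$ is tuned precisely so that $|\gamma\gamma_t|\le C\zeta^2$ with the $\zeta$ of (\ref{2.27})); getting these comparisons right is what makes Proposition~\ref{P2} usable as the right-hand side, and it is where the argument has to be checked carefully rather than the analytical steps themselves, which are otherwise standard.
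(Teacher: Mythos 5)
Your proposal is correct and follows essentially the same route as the paper: the core step is testing \eqref{2.1a} with $\gamma^2 v_t$, using $\gamma\le C\zeta\le C\tilde\eta$ and $|\gamma\gamma_t|\le C\zeta^2$ to reduce the right-hand side to the quantities controlled by Proposition~\ref{P2}, and your weight bookkeeping (the exponent $33/2$ giving $|\gamma\gamma_t|\le C\hat\rho^2\ell^{24}=C\zeta^2$) checks out. The only difference is cosmetic: for the $\iint_Q\gamma^2|\Delta v|^2$ term the paper tests the equation with $\gamma^2 Av$ and uses $\int_\Omega\gamma^2\nabla q\cdot Av\,dx=0$, whereas you invoke stationary Stokes regularity at each time; since $\nu\le\mu\le\nu+c_\nu M$ the two arguments are equivalent and yield the same bound.
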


\begin{proof}
   Recall that, under the assumption $v_{0}\in V,$ if $\Phi_{0}$ is a nonnegative $C^1$~function in~$[0,T]$ and~$f\in L^2(Q)^N$, the solution $(v,q)$ to (\ref{2.1a}) satisfies
$$ v \in L^2(0,T; D(A))\cap C^0([0,T]; V), \; \ \; \ \ v_{t} \in L^2(0,T; H), $$
where $A : D(A) \mapsto H$ is the Stokes operator.

Let us multiply the PDE in (\ref{2.1a}) by $ \gamma^2 v_{t} $ and let us integrate in $\Omega$. The following holds:
$$ \int_{\Omega} \gamma^2 ( v_{t} - \mu\Delta v + \nabla q) \cdot v_{t} \,dx = \int_{\Omega} \gamma^2 ( u 1_{\omega} + f) \cdot v_{t} \,dx .$$
From the definition of $\gamma$, $\zeta$ and $\tilde{\eta}$ we see that
$$ \gamma \leq C\zeta \leq C \tilde{\eta} \;;  \; \ \; \ \; \ |\gamma \gamma_{t}|\leq C \zeta^2.  $$
Consequently, for any small $\epsilon > 0,$ we find that
$$  \int_{\Omega}\gamma^2 u1_{\omega}\cdot v_{t} \,dx \leq \epsilon\int_{\Omega}\gamma^2 |v_{t}|^2 \,dx + C_{\epsilon}\int_{\omega}\tilde{\eta}^2|u|^2 \,dx, $$
$$  \int_{\Omega}\gamma^2 f \cdot v_{t} \,dx \leq \epsilon \int_{\Omega}\gamma^2 |v_{t}|^2 \,dx + C_{\epsilon}\int_{\Omega}\tilde{\eta}^2|f|^2 \,dx $$
and, also,
\begin{align}
  - \int_{\Omega}\gamma^2 \Delta v\cdot v_{t} \,dx & = \ \frac{1}{2}\frac{d}{dt}\int_{\Omega}\gamma^2|\nabla v|^2 \,dx - \int_{\Omega} \gamma\gamma_{t}|\nabla v|^2 \;dx \nonumber \\
   & \geq \,  \frac{1}{2}\frac{d}{dt}\int_{\Omega}\gamma^2|\nabla v|^2 \,dx - C \int_{\Omega}\zeta^2|\nabla v|^2 \,dx. \nonumber
\end{align}

 On the other hand, the integral involving the pressure vanishes. Therefore, the following is found integrating in time:
\begin{align}\label{2.31}
\iint_{Q} & \gamma^2 |v_{t}|^2 \,dx\,dt + \max_{[0,T]}\int_{\Omega}\gamma^2|\nabla v |^2 \,dx \leq C \biggl(\|v_{0}\|^2_{H_{0}^1(\Omega)} \nonumber \\
& +\! \iint_{Q}\zeta^2 |\nabla v|^2 \,dx \,dt \!+\! \iint_{Q}\tilde{\eta}^2 |f|^2 \,dx\,dt \!+\! \iint_{\omega \times (0,T)} \tilde{\eta}^2 |u|^2 \,dx\,dt\biggr).
 \end{align}

 This furnishes the estimates of $\gamma^2|v_{t}|^2$ and $\gamma^2|\nabla v|^2$ in \eqref{2.30a}.

 In order to estimate the weighted integral of $|\Delta v|^2$, we multiply the PDE in (\ref{2.1a}) by $\gamma^2 Av$ (recall that $A$ is the Stokes operator). After integration in $\Omega$, we have:
 $$ \int_{\Omega} \gamma^2 (v_{t} - \mu\Delta v + \nabla q)\cdot Av \,dx = \int_{\Omega} \gamma^2 ( u1_{\omega} + f) \cdot Av \,dx. $$
 Observe that
 \begin{align}\label{2.321} \nonumber
 &  \int_{\Omega}\gamma^2 u 1_{\omega}\cdot Av \,dx \leq \epsilon \int_{\Omega}\gamma^2|\Delta v|^2 \,dx + C_{\epsilon}\int_{\omega}\tilde{\eta}^2|u|^2 \,dx, \\ \nonumber
 &  \int_{\Omega}\gamma^2 f \cdot Av \,dx \leq \epsilon \int_{\Omega}\gamma^2|\Delta v|^2 \,dx + C_{\epsilon}\int_{\Omega}\tilde{\eta}^2|f|^2 \,dx, \\  \nonumber
 &  \int_{\Omega}\gamma^2 v_{t}\cdot Av \,dx = \int_{\Omega}\gamma^2 v_{t}\cdot (-\Delta v) \,dx \leq \epsilon \int_{\Omega}\gamma^2|\Delta v|^2 \,dx + C_{\epsilon}\int_{\Omega}\gamma^2|v_{t}|^2 \,dx  \nonumber
\end{align}
for any small $\epsilon > 0$,
$$\int_{\Omega}\gamma^2 \nabla q \cdot Av \,dx = 0 \; \ \; \  \mbox{and} \; \ \; \ \int_{\Omega}\gamma^2 (- \Delta v) \cdot Av \,dx = \int_{\Omega}\gamma^2 |\Delta v|^2 \,dx. $$
Integrating in time, we now see that
\begin{align}
\displaystyle \iint_{Q} \gamma^2|\Delta v|^2 \,dx\;dt & \leq C \biggl( \iint_{\omega \times(0,T)}\tilde{\eta}^{ \ 2}|u|^2 \,dx \;dt \nonumber \\
& + \iint_{Q}\tilde{\eta}^2 |f|^2 \,dx \;dt + \iint_{Q}\gamma^2|v_{t}|^2 \,dx \;dt \biggr).\nonumber
\end{align}
From (\ref{2.31}) and this last inequality, we deduce (\ref{2.30a}).
\end{proof}

\section{ Proof of Theorem \ref{T1}}\label{Sec3.}
\noindent

In this section, we will prove the partial local null controllability of (\ref{1.2}).

Let us denote by $L^2(\tilde{\eta}^2; Q)$ the Hilbert space formed by the measurable functions $f = f(x,t)$ such that~$\tilde{\eta} f\in L^2(Q)$, that is,
   $$
\|f\|^2_{L^2(\tilde{\eta}^2; Q)} := \iint_Q \tilde{\eta}^2|f|^2 \;dx\;dt < +\infty.
   $$

Let $\Phi_0 \in C^1([0,T])$ be given, with $\Phi_0 \geq 0$ and $\|\Phi_0\|_{C^1([0,T])} \leq M$. Let us set
   \begin{equation}\label{3.32}
L(\Phi_0)v := v_{t} - (\nu + c_{\nu}\Phi_0) \Delta v
   \end{equation}
and let us introduce the Hilbert spaces
   \begin{align}\label{3.33}
W(\Phi_0) :=  \Bigl\{  (v,& \, q, \, u)  : \; \tilde{\rho}v \in L^2(Q)^N,  \; \; \gamma v \in L^2(0,T; D(A)), \nonumber \\
   \tilde{\eta}u & \in L^2(\omega \times (0,T))^N, \ q \in L^2(0,T; H^1(\Omega)), \\
   \noalign{\smallskip}
   \ \int_{\Omega} q &\,dx = 0  \ \; a.e. \; \ \tilde{\eta}\bigl(L(\Phi_0) + \nabla q - u 1_{\omega}\bigr) \in L^2(Q)^N \Bigr\} \nonumber
   \end{align}
and
   \begin{equation}
Z :=  L^2(\tilde{\eta}^2; Q)^N \times V \;.
   \end{equation}

We will use the following Hilbertian norms in $W(\Phi_0)$ and $Z$:
\begin{equation}
\begin{array}[c]{ll}
\displaystyle
\| (v, q, u) \|^2_{W(\Phi_0)} & :=  \|\tilde{\rho} \;v\|^2_{L^2(Q)}+ \|\gamma v\|^2_{L^2(0,T; D(A))} + \|\tilde{\eta} \;u\|^2_{L^2(\omega \times (0,T))} \\
\noalign{\smallskip}
& + \, \|q\|^2_{L^2(0,T; H^1(\Omega))}
+ \|\tilde{\eta}\;(L(\Phi_0) + \nabla q - u 1_{\omega})\|^2_{L^2(Q)^N}
\end{array}
\end{equation}

and

\begin{equation}
\begin{array}[c]{ll}
\displaystyle
   \|(f, v_0)\|^2_{Z} := & \|f\|^2_{L^2(\tilde{\eta}^2; Q)} + \|v_0\|^2_{H^1_0} \ .
\end{array} \label{2.30}%
\end{equation}

Note that, if \textsc{$(v, q, u)$} $\in W(\Phi_0),$ then $v_{t} \in L^2(Q)^N,$ whence $v : [0,T] \mapsto V$  is (absolutely) continuous and, in particular, we have $v(\cdot \, , 0) \in V$  and
$$ \| v(\cdot\, , 0) \|_{V}  \leq C(M) \|(v, q, u)\|_{W(\Phi_0)} \; \; \ \forall \; (v, q, u) \in W(\Phi_0) .  $$
Furthermore, in view of Propositions~\ref{P2} and~\ref{P3}, one also has $\zeta v \in L^2(0,T; V) \cap L^{\infty}(0,T; H)$ and $\gamma v \in L^2(0,T; D(A)) \cap L^{\infty}(0,T; V),$ with norms bounded again by $C(M)\|(v, q, u)\|_{W(\Phi_0)} \ .$

In the first part of this section, we will assume that the turbulent viscosity is given and we will try to drive the (averaged) velocity field exactly to zero. Then, we will use a fixed-point argument to prove that the whole system \eqref{1.2}--\eqref{1.3} is partially locally null-controllable.

Thus, let us consider the mapping $\mathcal{A} : W(\Phi_0) \mapsto Z,$ given as follows:
\begin{equation}\label{3.34}
\mathcal{A}(v, q, u) := \bigl(v_{t} + (v\cdot\nabla)v - (\nu+c_{\nu}\Phi_{0}) \Delta v + \nabla q - u 1_{\omega} \ , \ v(\cdot\, , 0)  \bigr).
\end{equation}

We will check that there exists $\epsilon > 0$ such that, if $(f, v_{0}) \in Z$ and $\|(f, v_{0})\|_{Z} \leq \epsilon $, the equation
 \begin{equation}\label{3.39}
 \mathcal{A}(v, q, u) = (f, v_{0}), \; \ \; \ \ (v, q, u)\in W(\Phi_0),
 \end{equation}
 possesses at least one solution. In particular, this will show that the nonlinear system
       \begin{equation}
\left\{
\begin{array}
[c]{ll}%
\vspace{.2cm}
v_{t} + (v \cdot \nabla)v - (\nu+c_{\nu}\Phi_{0}) \Delta v + \nabla q = u 1_{\omega} + f \; \  & \mbox{in}\;\;Q,\\
\vspace{.2cm}
\nabla\cdot{v} = 0 & \mbox{in}\;\;Q,\\
\vspace{.2cm}
v = 0 & \mbox{on}\;\;\Sigma,\\
\vspace{.2cm}
v(x,0) = v_{0}(x)  \;\ \ \ \ & \mbox{in}\;\;\Omega\\
\end{array}
\right.  \label{3.31a}
   \end{equation}
is \emph{locally null controllable }and, furthermore, the state-controls $(v, q, u)$ can be chosen in $W(\Phi_0)$.

We will apply the following version of the \emph{Inverse} \emph{Mapping} \emph{Theorem} in infinite dimensional spaces that can be found for instance in \cite{A.1}.
In the following statement, $B_r(0)$ and $B_{\epsilon}(\zeta_0)$ are open balls respectively of radius~$r$ and~$\epsilon$.

\begin{theorem}\label{T3}
   Let  Y and Z be Banach spaces and let $H : B_{r}(0) \subset Y \mapsto Z$ be a $C^1$ mapping. Let us assume that the derivative $H'(0) : Y \mapsto Z$ is an epimorphism and let us set $\zeta_{0} = H(0)$.
   Then there exist $\epsilon > 0$, a mapping $S : B_{\epsilon}(\zeta_{0}) \subset Z \mapsto Y$ and a constant $K > 0$ satisfying:
   \begin{equation}
\left\{
\begin{array}
[c]{ll}%
\vspace{.2cm}
S(z) \in B_{r}(0) \; \ \mbox{and}  \; \ H(S(z)) = z \; \ \; \ \forall z \in B_{\epsilon}(\zeta_{0}), \\
\|S(z)\|_{Y} \leq K \|z - H(0)\|_{Z} \; \ \; \ \forall z \in B_{\epsilon}(\zeta_{0}).  \\
\end{array}
\right.  \label{3.31b}
   \end{equation}
\end{theorem}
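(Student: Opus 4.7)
The plan is to prove this as a Lyusternik--Graves-type surjection theorem, via a Newton-style iteration whose convergence is driven by the Banach Open Mapping Theorem combined with the $C^1$ regularity of $H$. The underlying idea is that a continuous linear surjection between Banach spaces admits a bounded (nonlinear, but continuous) right inverse, and that a $C^1$ perturbation of an open mapping remains ``openly'' surjective in a neighbourhood of the base point.

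First I would reduce to the case $\zeta_0 = H(0) = 0$ by replacing $H$ with $\widetilde H(y) := H(y) - \zeta_0$, which is still $C^1$ on $B_r(0)$ with the same derivative at $0$. Since $\widetilde H'(0) = H'(0) : Y \to Z$ is a continuous linear surjection between Banach spaces, the Open Mapping Theorem produces a constant $N > 0$ such that $H'(0)\bigl(B_N^Y(0)\bigr) \supset B_1^Z(0)$; equivalently, for every $w \in Z$ there is $y \in Y$ with $H'(0)y = w$ and $\|y\|_Y \leq N\|w\|_Z$. Invoking the Bartle--Graves selection theorem, one can organise this choice into a (generally nonlinear) continuous right inverse $R : Z \to Y$ with $H'(0)R(w)=w$ and $\|R(w)\|_Y \leq N\|w\|_Z$. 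The $C^1$ hypothesis then supplies a radius $\delta \in (0, r]$ such that $\|H'(y) - H'(0)\|_{\mathcal{L}(Y,Z)} \leq 1/(2N)$ whenever $\|y\|_Y \leq \delta$.

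Next I would run the Newton-like iteration: for $z \in Z$ with $\|z\|_Z \leq \epsilon$, where $\epsilon := \min(r,\delta)/(2N)$, set $y_0 = 0$ and $y_{n+1} = y_n + R\bigl(z - \widetilde H(y_n)\bigr)$. The goal is to show inductively that the iterates stay inside $B_\delta(0)$ and that the residuals contract geometrically. The crucial identity is
\begin{equation*}
z - \widetilde H(y_{n+1}) = -\!\int_0^1 \bigl[H'\bigl(y_n + t(y_{n+1}-y_n)\bigr) - H'(0)\bigr](y_{n+1}-y_n)\,dt,
\end{equation*}
which, together with $\|y_{n+1}-y_n\|_Y \leq N\|z - \widetilde H(y_n)\|_Z$ and the bound on $H'-H'(0)$ over $B_\delta(0)$, gives $\|z - \widetilde H(y_{n+1})\|_Z \leq \tfrac{1}{2}\|z - \widetilde H(y_n)\|_Z$. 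Hence $\|y_{n+1}-y_n\|_Y \leq N\,2^{-n}\|z\|_Z$, so $\{y_n\}$ is Cauchy with $\|y_n\|_Y \leq 2N\|z\|_Z \leq \delta$, closing the induction.

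Finally, setting $S(z) := \lim_n y_n$ defines a map on $B_\epsilon(0)$; continuity of $\widetilde H$ gives $\widetilde H(S(z)) = z$, i.e. $H(S(z+\zeta_0)) = z + \zeta_0$ after translating back, which is exactly the first line of \eqref{3.31b}, while the telescoping bound yields $\|S(z)\|_Y \leq 2N\|z - H(0)\|_Z$, so $K = 2N$ works. The main delicate point is ensuring that the choice of right inverse gives a single coherent map $S$ on all of $B_\epsilon(\zeta_0)$ rather than, for each individual $z$, a preimage obtained by uncorrelated choices; the Bartle--Graves selection handles this cleanly. I emphasise that nothing in the argument requires $H'(0)$ to be injective --- only surjective --- which is precisely the \emph{epimorphism} hypothesis of the statement.
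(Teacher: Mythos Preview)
The paper does not prove Theorem~\ref{T3} at all: it is quoted as a known result (Liusternik's Inverse Function Theorem) and referred to~\cite{A.1}. So there is no ``paper's own proof'' to compare against; you have supplied an argument where the authors only give a citation.

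That said, your argument is correct and is essentially the standard proof of the Lyusternik--Graves surjection theorem via a Newton-type iteration. The reduction to $\zeta_0=0$, the use of the Open Mapping Theorem to obtain a bounded right inverse of $H'(0)$, the contraction estimate coming from the integral remainder and the $C^1$ smallness $\|H'(y)-H'(0)\|\le 1/(2N)$ on $B_\delta(0)$, and the telescoping bound $\|S(z)\|_Y\le 2N\|z-\zeta_0\|_Z$ are all right, and the induction keeping the iterates inside $B_\delta(0)\subset B_r(0)$ closes without issue.

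One small comment: you invoke Bartle--Graves to get a \emph{continuous} right inverse $R$ and stress that this is what makes $S$ a ``single coherent map''. In fact the statement of Theorem~\ref{T3} asks only for the existence of a map $S$ satisfying the two displayed properties, with no continuity requirement on $S$. A mere bounded (set-theoretic) selection $R$, obtained from the Open Mapping Theorem together with the axiom of choice, already suffices: for each fixed $z$ the iteration produces a well-defined limit $S(z)$, and this defines $S$ on all of $B_\epsilon(\zeta_0)$. Bartle--Graves would be needed only if you also wanted $S$ to be continuous, which is a stronger conclusion than what is claimed here.
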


   Notice that, in this result, usually known as {\it Liusternik's Inverse Function Theorem,} $S$ is the inverse to the right of~$H$.
   In order to show that Theorem \ref{T3} can be applied in this setting, we will use several lemmas.

\begin{lemma} {Let $\mathcal{A} : W(\Phi_0) \mapsto Z$ be the mapping defined by \emph{(\ref{3.34})}. Then, $\mathcal{A}$ is well defined and continuous.}\label{L4}
\end{lemma}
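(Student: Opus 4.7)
The goal is to verify that $\mathcal{A}$ sends $W(\Phi_0)$ into $Z$ and is continuous. I would split
\[
\mathcal{A}(v,q,u) = \mathcal{A}_\ell(v,q,u) + \mathcal{A}_n(v,q,u),
\]
where $\mathcal{A}_\ell(v,q,u) := (L(\Phi_0)v + \nabla q - u\mathbf{1}_\omega,\; v(\cdot,0))$ and $\mathcal{A}_n(v,q,u) := ((v\cdot\nabla)v,\; 0)$. The linear part $\mathcal{A}_\ell$ is bounded from $W(\Phi_0)$ into $Z$: its first component lies in $L^2(\tilde{\eta}^2;Q)^N$ by the very last clause in the definition of $W(\Phi_0)$, and the inequality $\|v(\cdot,0)\|_V \leq C(M)\|(v,q,u)\|_{W(\Phi_0)}$ is exactly what is recorded in the paragraph preceding the lemma (via $v_t\in L^2(Q)^N$ and the embedding of $W(\Phi_0)$ into $C^0([0,T];V)$). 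The quadratic part $\mathcal{A}_n$ will be continuous as soon as the associated bilinear map $B(\boldsymbol{v}_1,\boldsymbol{v}_2) := \tilde{\eta}(v_1\cdot\nabla)v_2$ is bounded from $W(\Phi_0)\times W(\Phi_0)$ into $L^2(Q)^N$, where $\boldsymbol{v}_i = (v_i,q_i,u_i)$; the continuity of $\mathcal{A}$ then follows by polarization.

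The substantive step is the estimation of $B$. The idea is to exploit the hidden regularity of elements of $W(\Phi_0)$: applying Propositions~\ref{P2}--\ref{P3} to each $\boldsymbol{v}_i$ with source $f_i := L(\Phi_0)v_i + \nabla q_i - u_i\mathbf{1}_\omega$ (which satisfies $\tilde{\eta} f_i \in L^2(Q)^N$ by definition of $W(\Phi_0)$) yields
\[
\|\gamma v_i\|_{L^\infty(0,T;V)} + \|\gamma v_i\|_{L^2(0,T;D(A))} \leq C(M)\,\|\boldsymbol{v}_i\|_{W(\Phi_0)}.
\]
Combining the Sobolev embedding $D(A) \hookrightarrow H^2(\Omega)^N \hookrightarrow L^\infty(\Omega)^N$, valid both for $N=2$ and $N=3$, with H\"older's inequality gives the pointwise-in-$t$ estimate $\|(v_1\cdot\nabla)v_2(t)\|_{L^2} \leq C\|v_1(t)\|_{D(A)}\|v_2(t)\|_V$. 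Setting $\bar{\tilde{\eta}}(t) := \sup_{x\in\Omega}\tilde{\eta}(x,t)$, one then factors
\[
\|B(\boldsymbol{v}_1,\boldsymbol{v}_2)\|_{L^2(Q)}^2 \leq C\int_0^T \Bigl(\tfrac{\bar{\tilde{\eta}}(t)}{\gamma(t)^2}\Bigr)^{2} \bigl(\gamma(t)\|v_1(t)\|_{D(A)}\bigr)^2 \bigl(\gamma(t)\|v_2(t)\|_V\bigr)^2 \,dt.
\]

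The main obstacle is therefore the scalar weight-compatibility check $\bar{\tilde{\eta}}/\gamma^2 \in L^\infty(0,T)$. From the explicit forms (\ref{2.15}) and (\ref{2.27}) one computes $\bar{\tilde{\eta}}/\gamma^2 \lesssim (\rho_*/\hat{\rho}^2)\,\ell(t)^{-25}$, so one must verify that $\rho_*/\hat{\rho}^2$ decays fast enough near $t=T$ to absorb the polynomial blow-up $\ell^{-25}$. Writing out $\rho_*/\hat{\rho}^2$ using the definitions of $\tilde{\alpha}$, $\rho_*$ and $\hat{\rho}$, the decay reduces to the elementary strict inequality $e^{\lambda(\|\eta^0\|_\infty+m_0+1)} + e^{\lambda m_0} > 2\,e^{\lambda(\|\eta^0\|_\infty+m_0)}$, which is automatic once $\lambda \geq \lambda_{00}$ is taken large enough (the left-hand side contains $e^\lambda$ times the right-hand side's leading term). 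This is precisely the role of condition~(\ref{2.13}); the super-exponential decay of $\rho_*/\hat{\rho}^2$ then trivially defeats any polynomial power of $1/\ell$. With this in place, the integral above is bounded by
\[
\|\bar{\tilde{\eta}}/\gamma^2\|_{L^\infty}^2\,\|\gamma v_1\|_{L^2(0,T;D(A))}^2\,\|\gamma v_2\|_{L^\infty(0,T;V)}^2 \leq C(M)\,\|\boldsymbol{v}_1\|_{W(\Phi_0)}^2\,\|\boldsymbol{v}_2\|_{W(\Phi_0)}^2,
\]
which simultaneously shows that $\mathcal{A}_n(v,q,u) \in Z$ and that $B$ is bounded, so that $\mathcal{A}=\mathcal{A}_\ell+\mathcal{A}_n$ is a continuous (indeed locally Lipschitz) map from $W(\Phi_0)$ into $Z$.
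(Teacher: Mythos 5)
Your proof is correct, and while it follows the paper's overall skeleton (linear/quadratic splitting, hidden regularity of $W(\Phi_0)$ via Propositions~\ref{P2}--\ref{P3}, a weight-compatibility check), it handles the convective term by a genuinely different estimate. The paper bounds $\iint_Q\tilde{\eta}^2|(v\cdot\nabla)v|^2$ through the interpolation inequalities $\|\nabla w\|_{L^3}\leq C\|\nabla w\|^{1/2}\|\Delta w\|^{1/2}$ and $\|(w\cdot\nabla)w\|^2\leq C\|\nabla w\|^3\|\Delta w\|$, splitting the weight as $\tilde{\eta}^2\leq C\zeta\gamma^3$ (its~(\ref{3.40})) and distributing the factors among $\|\zeta v\|_{L^2(0,T;V)}$, $\|\gamma v\|_{L^\infty(0,T;V)}$ and $\|\gamma v\|_{L^2(0,T;D(A))}$. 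You instead use the cruder pointwise bound $\|(v_1\cdot\nabla)v_2\|\leq C\|v_1\|_{D(A)}\|v_2\|_V$ via $D(A)\hookrightarrow L^\infty$, which forces the stronger-looking weight comparison $\bar{\tilde{\eta}}\lesssim\gamma^2$ in place of $\tilde{\eta}^2\lesssim\zeta\gamma^3$; both comparisons rest on the same mechanism --- the strict form of~(\ref{2.13}) makes $\rho_{\ast}/\hat{\rho}^2$ decay super-exponentially as $t\to T$, absorbing any polynomial power of $\ell^{-1}$ --- and you verify this explicitly, which the paper does only implicitly when asserting~(\ref{3.40}). Your route buys two things: casting the nonlinearity as a bounded bilinear map $B$ on $W(\Phi_0)\times W(\Phi_0)$ yields continuity (indeed local Lipschitz continuity) of $\mathcal{A}$ by polarization, where the paper only says continuity follows from ``similar arguments''; and your bound is homogeneous of the correct degree, whereas the exponents displayed in the paper's estimate~(\ref{3.33a}) are inconsistent (a degree-$2$ right-hand side for a degree-$4$ quantity), a slip your formulation avoids. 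The paper's sharper interpolation would matter if one needed the estimate under weaker regularity than $\gamma v\in L^2(0,T;D(A))\cap L^\infty(0,T;V)$, but since that regularity is built into $W(\Phi_0)$, nothing is lost here.
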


\begin{proof} First, note that, in view of (\ref{2.27}) and (\ref{2.13}) we have
   \begin{equation}\label{3.40}
\tilde{\eta}^2 \leq C\zeta\gamma^3 \leq C\gamma^6 .
   \end{equation}
   Notice that
   \begin{align}
\|\mathcal{A}(v, q, u)\|_{Z}^2 & = \iint_{Q}\tilde{\eta}^2|v_{t} + (v \cdot \nabla )v 
- (\nu + c_{\nu}\Phi_{0})\Delta v + \nabla q - u1_{\omega} |^2 \;dx \;dt \nonumber \\
& + \|v(\cdot,0)\|^2_{H_0^1} \, . \nonumber
   \end{align}
The following holds:
\begin{align}\label{3.41}
\iint_{Q}&\tilde{\eta}^2|v_{t} + (v \cdot \nabla )v - (\nu + c_{\nu}\Phi_{0})\Delta v + \nabla q - u1_{\omega} |^2 \;dx\;dt  \nonumber \\
& \leq 2 \iint_{Q}\tilde{\eta}^2|v_{t} - (\nu + c_{\nu} \Phi_{0})\Delta v + \nabla q - u1_{\omega} |^2 \;dx\;dt  \nonumber \\
& + 2 \iint_{Q}\tilde{\eta}^2|(v \cdot \nabla)v|^2 \,dx\;dt \nonumber\\
& := M_1 + M_2 \, .
\end{align}
From the definitions of the space $W(\Phi_0)$ and the norm $\|(v, q, u)\|_{W(\Phi_0)}$, it follows that
\begin{align}  \nonumber
M_1 \leq C\|(v, q, u)\|^2_{W(\Phi_0)} \ . \nonumber
\end{align}
On the other hand, taking into account that for any $w \in D(A)$ one has
$$\|\nabla w\|_{L^3} \leq C \|\nabla w\|^{1/2}\|\Delta w\|^{1/2}$$
 and  
$$\|(w \cdot \nabla)w \|^2 \leq C \|w\|^2_{L^6}\|\nabla w\|^2_{L^3} \leq C \|\nabla w\|^3\|\Delta w\|,$$
we see that
\begin{align}\label{3.33a}
M_2 \leq & \; C \|\zeta v\|^{1/2}_{L^2(0,T; V)} \|\gamma v\|_{L^{\infty}(0,T; V)} \|\gamma v\|^{1/2}_{L^2(0,T; D(A))} \nonumber \\
\leq & \; C\|(v, q, u)\|^2_{W(\Phi_0)} \ .
\end{align}

This shows that $\mathcal{A}$ is well defined. On the other hand, that $\mathcal{A}$ is continuous is easy to prove using similar arguments; for brevity, we omit the details.
\end{proof}

\begin{lemma} {The mapping $\mathcal{A}  :  W(\Phi_0) \mapsto Z$ is continuously differentiable.}\label{L5}
\end{lemma}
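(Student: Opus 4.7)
The plan is to decompose $\mathcal{A} = \mathcal{L} + \mathcal{N}$ with
\begin{equation*}
\mathcal{L}(v,q,u) := \bigl(L(\Phi_0)v + \nabla q - u\,1_\omega,\; v(\cdot,0)\bigr), \qquad \mathcal{N}(v,q,u) := \bigl((v\cdot\nabla)v,\;0\bigr),
\end{equation*}
and to treat the two pieces separately. The map $\mathcal{L}$ is linear, and by the very definition of the norm on $W(\Phi_0)$ together with the continuity of the trace $v\mapsto v(\cdot,0)$ from $W(\Phi_0)$ into $V$ observed just after~\eqref{2.30}, it is bounded from $W(\Phi_0)$ into $Z$. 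Hence $\mathcal{L}$ is smooth with $\mathcal{L}'\equiv\mathcal{L}$, and the whole question reduces to the purely quadratic piece $\mathcal{N}$.

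The natural candidate for the derivative of $\mathcal{N}$ at $(v,q,u)$ applied to $(w,r,s)$ is the symmetrization
\begin{equation*}
\mathcal{N}'(v,q,u)(w,r,s) := \bigl((w\cdot\nabla)v + (v\cdot\nabla)w,\;0\bigr),
\end{equation*}
and my first step is to show that this defines a bounded bilinear map $W(\Phi_0)\times W(\Phi_0)\to Z$. The computation is a direct copy of the one carried out in~\eqref{3.33a}, the only modification being that the two nonlinear slots now host different fields: combining the weight comparison~\eqref{3.40} $\tilde{\eta}^2\leq C\zeta\gamma^3$, the embedding $H^1_0(\Omega)\hookrightarrow L^6(\Omega)$ and the interpolation inequality $\|\nabla v\|_{L^3}\leq C\|\nabla v\|^{1/2}\|\Delta v\|^{1/2}$, I expect
\begin{equation*}
\iint_Q \tilde{\eta}^2 |(w\cdot\nabla)v|^2 \,dx\,dt \;\leq\; C\,\|\gamma w\|^{2}_{L^\infty(0,T;V)}\,\|\zeta v\|_{L^2(0,T;V)}\,\|\gamma v\|_{L^2(0,T;D(A))},
\end{equation*}
and each factor is then dominated by the appropriate $W(\Phi_0)$-norm through Propositions~\ref{P2} and~\ref{P3}. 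The term $(v\cdot\nabla)w$ is handled identically after swapping $v$ and $w$.

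Once the bilinear bound is in place, the two remaining steps are immediate. Since $\mathcal{N}$ is quadratic, the Fr\'echet remainder equals exactly $((h\cdot\nabla)h,0)$, whose $Z$-norm is $\leq C\|(h,\pi,\sigma)\|^2_{W(\Phi_0)}$ by the same estimate applied to $v=w=h$, which is $o(\|(h,\pi,\sigma)\|_{W(\Phi_0)})$. Continuity of $(v,q,u)\mapsto\mathcal{N}'(v,q,u)$ then comes for free: this map is linear because $\mathcal{N}$ is quadratic, and the boundedness above yields
\begin{equation*}
\|\mathcal{N}'(v_1,q_1,u_1)-\mathcal{N}'(v_2,q_2,u_2)\|_{\mathcal{L}(W(\Phi_0),Z)} \leq C\,\|(v_1-v_2,q_1-q_2,u_1-u_2)\|_{W(\Phi_0)},
\end{equation*}
i.e.\ Lipschitz continuity. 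The one point where genuine care is needed, and therefore the only plausible obstacle, is the distribution of the weights in $\tilde{\eta}^2\leq C\zeta\gamma^3$ between $v$ and $w$: the apparent asymmetry (three powers of $\gamma$ against one of $\zeta$) is exactly absorbed by the interpolation, which feeds $\gamma^{1/2}\|\Delta v\|^{1/2}$ into $L^2(0,T;D(A))$, $\gamma^{1/2}\|\nabla v\|^{1/2}$ into $L^\infty(0,T;V)$, and leaves $\zeta\|\nabla v\|$ in $L^2(0,T;V)$.
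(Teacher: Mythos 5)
Your proof is correct, and it rests on exactly the same analytic content as the paper's: the weighted bilinear estimate obtained from $\tilde{\eta}^2\leq C\zeta\gamma^3$, the embedding $H^1_0\hookrightarrow L^6$ and the interpolation $\|\nabla v\|_{L^3}\leq C\|\nabla v\|^{1/2}\|\Delta v\|^{1/2}$, which is precisely the estimate \eqref{3.33a} and its two-slot variant used in the paper to prove \eqref{3.51}. The difference is organizational. The paper first computes the G\^ateaux derivative, checks that the difference quotient converges (the remainder being exactly $\sigma(v'\cdot\nabla)v'$), then separately proves continuity of the G-derivative map and invokes a classical theorem (Saaty, Theorem 1-2, p.~21) to upgrade G\^ateaux-plus-continuous-derivative to $C^1$ Fr\'echet. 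You instead decompose $\mathcal{A}=\mathcal{L}+\mathcal{N}$ with $\mathcal{L}$ bounded linear and $\mathcal{N}$ the quadratic form of a bounded bilinear map, which gives Fr\'echet differentiability directly (the remainder is exactly the quadratic term evaluated on the increment) and makes the continuity of the derivative automatic, since $(v,q,u)\mapsto\mathcal{N}'(v,q,u)$ is then a bounded \emph{linear} map into $\mathcal{L}(W(\Phi_0);Z)$, hence Lipschitz. Your route is slightly more self-contained in that it dispenses with the G\^ateaux-to-Fr\'echet citation; the paper's route is the one that generalizes to nonlinearities that are not polynomial. Both proofs stand or fall on the same point, namely that the convective bilinear form is bounded from $W(\Phi_0)\times W(\Phi_0)$ into $L^2(\tilde{\eta}^2;Q)^N$, and your distribution of the weights between the two slots is the correct one.
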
\label{L32}
\begin{proof} {We will present the proof when $N = 3$.} The proof for $N = 2$ is similar.

Let us first see that $\mathcal{A}$ is G-differentiable at any $(v, q, u) \in W(\Phi_0)$ and let us compute the G-derivative $\mathcal{A}'(v, q, u).$

Thus, let us fix $(v, q, u)$ and let us fix $(v', q', u') \in W(\Phi_0)$  and  $\sigma > 0.$ Let us denote by $\mathcal{A}_1$ and $\mathcal{A}_2$ the components of $\mathcal{A}$:
   $$
\mathcal{A}_1(v, q, u):= v_t + (v \cdot \nabla)v - (\nu + c_{\nu}\Phi_0) \Delta v + \nabla q - u \mathds{1}_{\omega}, \ \  \mathcal{A}_2(v, q, u):= v(\cdot , 0).
   $$
   We have:
   \begin{align}\label{3.43}
&\frac{1}{\sigma} \biggl[ \mathcal{A}_1((v, q, u) + \sigma(v', q', u')) - \mathcal{A}_1(v, q, u) \biggr] \nonumber  \\
& = v'_{t} \!+\! (v' \cdot \nabla)v \!+\! \sigma(v' \cdot \nabla)v' \!-\! (\nu + c_{\nu}\Phi_0) \Delta v' \!+\! \nabla q' \!-\! u' 1_{\omega} \!+\! (v \cdot \nabla)v'.
  \end{align}

   Let us introduce the linear mapping $D\mathcal{A} : W(\Phi_0) \mapsto Z$, with  $D\mathcal{A} = (D\mathcal{A}_1, D\mathcal{A}_2)$ and
   \begin{align}\label{3.36}
& D\mathcal{A}_{1}(v', q', u') = v'_{t} \!+\! (v \cdot \nabla)v' \!+\! (v' \cdot \nabla)v \!-\! (\nu \!+\! c_{\nu}\Phi_0) \Delta v' \!+\!  \nabla q' \!-\! u' 1_{\omega}\ ,
   \end{align}
and
   \begin{align} \label{3.37}
D\mathcal{A}_{2}(v', q', u') =  v'(\cdot, 0).
   \end{align}

From the definition of the spaces $W(\Phi_0)$, \ $Z$ and \eqref{3.36}--\eqref{3.37}, it becomes clear that $D\mathcal{A} \in \mathcal{L}(W(\Phi_0); Z).$ Furthermore,
\begin{align}
&\frac{1}{\sigma} \biggl[ \mathcal{A}((v, q, u) + \sigma(v', q', u')) - \mathcal{A}(v, q, u) \biggr] \rightarrow D\mathcal{A}(v, q, u)(v', q', u')\nonumber
\end{align}
strongly in $Z$ as $\sigma\rightarrow 0.$

Indeed, using the estimate of $M_2$ in (\ref{3.33a}), we get
\begin{align}
\frac{1}{\sigma} \bigl\| \mathcal{A}_{1}((v, q, u)&+ \sigma(v', q', u')) \!-\! \mathcal{A}_{1}(v, q, u) \!-\! D\mathcal{A}_{1}(v, q, u)(v', q', u')\bigr\|_{L^2(\tilde{\eta}^2; Q)} \nonumber \\
& = \|  \sigma(v'\cdot \nabla) v' \|_{L^2(\tilde{\eta}^2; Q)} \; \leq C \sigma \; \|(v', q', u')\|^2_{W(\Phi_0)} \rightarrow 0, \nonumber
\end{align}
while $\mathcal{A}_{2}$ is linear and continuous from $W(\Phi_0)$ into $V$.

Thus, $\mathcal{A}$\;is\;$G$-differentiable at any\; $(v, q, u) \in W(\Phi_0)$, with $G$-derivative
\begin{equation}\label{3.338a}
\mathcal{A}'(v, q, u) = D\mathcal{A} \ .
\end{equation}

Now, let us prove that the mapping $(v, q, u) \mapsto \mathcal{A}'(v, q, u)$ \;is continuous from $W(\Phi_0)$ into $\mathcal{L}(\Phi_0; Z) \ .$
As a consequence, in view of classical results, we will have that $\mathcal{A}$ is not only $G$-differentiable but also $F$-differentiable and $C^1$ and we will get the desired
result, see for instance Theorem 1-2, p.~21 in \cite{T.L.S}.

Thus, let assume that\; $(v^n, q^n, u^n) \rightarrow (v, q, u)$ \;in $W(\Phi_0)$ \;and let us check that
\begin{align}
& \| (D\mathcal{A}(v^n, q^n, u^n) - D\mathcal{A}(v, q, u))\;(v', q', u')\|^2_{Z} \leq \epsilon_{n} \| (v', q', u')\|^2_{W(\Phi_0)} ,\label{3.51}
\end{align}
for all $(v, q, u) \in W(\Phi_0)$  for some $\epsilon_n \rightarrow 0.$

Arguing as in (\ref{3.41}), the following holds:
\begin{align}\nonumber
 \| & (D\mathcal{A}_1(v^n, q^n, u^n) - D\mathcal{A}_1(v, q, u))\;(v', q', u')\|^2_{L^2(\tilde{\eta}^2; Q)} \nonumber\\
& \leq 2 \iint_Q \tilde{\eta}^2 |(v' \cdot \nabla) (v^n - v)|^2 \;dx\;dt + 2 \iint_Q \tilde{\eta}^2 |((v^n - v) \cdot \nabla)v'|^2 \;dx\;dt  \nonumber \\
& \leq 2 \int^T_0 \zeta \gamma^3 \|\nabla v'\|^2 \|\nabla (v^n -  v)\| \; \|\Delta (v^n - v) \|\;dt  \\
& + 2 \int^T_0 \zeta \gamma^3 \|\nabla(v^n - v)\|^2 \|\nabla v'\| \; \|\Delta v'\| \;dt \label{3.52} \nonumber \\
& \leq C \epsilon_{1,n} \;\|(v', q', u')\|^2_{W(\Phi_0)} \ , \nonumber
\end{align}
where
$$ \epsilon_{1,n} := \int^T_0 \zeta \gamma \|\nabla (v^n - v)\|\;\|\Delta (v^n - v)\| \;dt + \biggl( \sup_{[0,T]}\gamma(t) \|\nabla (v^n - v)\| \biggr)^2 \ .$$

Noting that $\epsilon_{1,n} \rightarrow 0 $  as $n \rightarrow \infty$ and recalling again that $\mathcal{A}_2$ is linear and continuous, we deduce that (\ref{3.51}) is satisfied.
\end{proof}

\begin{lemma}{Let $\mathcal{A}: W(\Phi_0) \mapsto Z$ be} the mapping defined by \textsc{(\ref{3.34})}.\label{L6} Then the linear mapping $\mathcal{A}' (0, 0, 0)$ is an epimorphism.
\end{lemma}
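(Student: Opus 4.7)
The plan is to observe that at the origin $\mathcal{A}'(0,0,0)$ is exactly the linearised problem whose null controllability is provided by Propositions~\ref{P1}--\ref{P3}. Indeed, setting $v=0$ in \eqref{3.36}--\eqref{3.37} the nonlinear convective contributions vanish and
$$\mathcal{A}'(0,0,0)(v,q,u)=\bigl(L(\Phi_0)v+\nabla q-u1_\omega,\;v(\cdot,0)\bigr).$$
Therefore, fixing an arbitrary $(f,v_0)\in Z$, the surjectivity claim reduces to producing $(v,q,u)\in W(\Phi_0)$ that solves the linear Stokes system \eqref{2.1a} with source $f$ and initial datum $v_0$. Note that the requirement $v(\cdot,T)=0$, which is not explicitly part of $W(\Phi_0)$, is already forced by $\tilde{\rho}v\in L^2(Q)^N$, since $\tilde{\rho}\to+\infty$ as $t\to T$.

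My first step is to invoke Proposition~\ref{P1}. Since $(f,v_0)\in Z$ one has $\tilde{\eta} f\in L^2(Q)^N$ and $v_0\in V\subset H$, precisely its hypotheses. It delivers a control $u\in L^2(\omega\times(0,T))^N$ together with an associated state $(v,q)$ solving \eqref{2.1a}, satisfying $v(\cdot,T)=0$ and the bound \eqref{2.24}. This already gives $\tilde{\rho}v\in L^2(Q)^N$ and $\tilde{\eta}u\in L^2(\omega\times(0,T))^N$. Moreover, since the triple solves \eqref{2.1a} we have the identity $L(\Phi_0)v+\nabla q-u1_\omega=f$, so $\tilde{\eta}(L(\Phi_0)v+\nabla q-u1_\omega)=\tilde{\eta}f\in L^2(Q)^N$ by the very membership $(f,v_0)\in Z$.

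The second step upgrades the regularity of $v$ via Propositions~\ref{P2} and~\ref{P3}. Because $v_0\in V$, both apply to this particular optimal triple, yielding $\zeta v\in L^2(0,T;V)\cap L^\infty(0,T;H)$ together with $\gamma v_t,\gamma\Delta v\in L^2(Q)^N$ and $\gamma v\in L^\infty(0,T;V)$, all with norms controlled by $\|(f,v_0)\|_Z$. Combined with $\gamma\le C\zeta$, this gives exactly $\gamma v\in L^2(0,T;D(A))$. For the pressure, from the PDE one recovers $\nabla q=-v_t+(\nu+c_\nu\Phi_0)\Delta v+u1_\omega+f$; normalising so that $\int_\Omega q\,dx=0$ a.e.\ and using that the weights are bounded below by a positive constant on $[0,T]$, a Ne\v{c}as/Poincar\'e--Wirtinger argument produces $q\in L^2(0,T;H^1(\Omega))$. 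Putting the pieces together shows $(v,q,u)\in W(\Phi_0)$ with $\mathcal{A}'(0,0,0)(v,q,u)=(f,v_0)$, so $\mathcal{A}'(0,0,0)$ is onto.

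There is no genuine obstacle beyond bookkeeping: the functional framework $W(\Phi_0)$ was engineered so that the triple supplied by the extremal problem \eqref{2.25} would land in it. The only subtle point is confirming that the $D(A)$-regularity with weight $\gamma$ is indeed produced by Proposition~\ref{P3}, which is where the extra assumption $v_0\in V$ enters and explains why $Z$ is built on $V$ rather than on $H$.
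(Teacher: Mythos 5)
Your proof is correct and follows essentially the same route as the paper: invoke Proposition~\ref{P1} to solve the linearized null-control problem for a given $(f,v_0)\in Z$, then verify that the resulting triple lies in $W(\Phi_0)$ so that $\mathcal{A}'(0,0,0)(v,q,u)=(f,v_0)$. You are in fact more explicit than the paper, which simply cites ``usual regularity results for Stokes-like systems,'' whereas you correctly trace the weighted memberships $\gamma v\in L^2(0,T;D(A))$ and $q\in L^2(0,T;H^1(\Omega))$ back to Propositions~\ref{P2} and~\ref{P3}.
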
\label{L33}

\begin{proof} Let us fix $(f, v_0) \in Z$. From Proposition \ref{P1}, we know that there exists $(v, q, u)$ satisfying (\ref{2.1a}), (\ref{2.23}) and (\ref{2.24}). From the usual regularity results for Stokes-like systems, we have $v \in L^2(0,T;D(A))$ and $q \in L^2(0,T;H^1(\Omega))$. Consequently $(v, q, u) \in W(\Phi_0)$ and, in view of (\ref{3.36}), (\ref{3.37}) and (\ref{3.338a}),
 $$\mathcal{A}' (0, 0, 0)(v', q', u')\; = \; (f,v_0) ,$$
 that is,
    \begin{equation} \nonumber
\left\{
\begin{array}
[c]{ll}%
\vspace{.2cm}
v_{t} - (\nu+c_{\nu}\Phi_{0})\Delta v + \nabla q = u1_{\omega} + f \; \; & \mbox{in}\;\;Q,\\
\vspace{.2cm}
\nabla\cdot{v} = 0 & \mbox{in}\;\;Q,\\
\vspace{.2cm}
v = 0 & \mbox{on}\;\;\Sigma,\\
\vspace{.2cm}
v(x,0) = v_{0}(x) \;\  & \mbox{on}\;\;\Omega.\\
\end{array}
\right. \\
   \end{equation}
This shows that $\mathcal{A}' (0, 0, 0)$ is surjective and ends to proof.
\end{proof}

In accordance with Lemmas~\ref{L4}, \ref{L5} and~\ref{L6}, we can apply Theorem~\ref{T3} and deduce that, at least when $(f,v_0)$ belongs to a neighborhood of the origin in $Z$ of the form $B_{\epsilon(M)}(0,0)$, the equation (\ref{3.39}) possesses a solution $(v,q,u)=S(f,v_0;\Phi_0),$ with
\begin{equation}\label{3.40a}
\|(v, q, u)\|_{W(\Phi_0)} \leq K(M)\|(f, v_0)\|_{Z} \ .
\end{equation}
 In particular, (\ref{3.31a}) is locally null-controllable.

Now, let $\phi_{00}$ and $k_0$ satisfy (\ref{1.4a}). Let us set
    \begin{equation}\label{3.40b}
\begin{array}
[c]{ll}%
M=&2\biggl(\displaystyle \phi_{00} + \frac{(a-2)T}{|\Omega|} \|k_0\|_{L^1(\Omega)} \biggr) \ , \ \; \; b_1= \biggl(\displaystyle \frac{|\Omega|}{2(a-2)c_{\nu}T} \biggr)^{1/2} \ , \; \; \\\\
\beta_0 =& \displaystyle\frac{\kappa \alpha^2 \phi_{00}}{\kappa \alpha^2 + \displaystyle\frac{2 c_0 \phi_{00}}{|\Omega|}\biggl( T\exp^T \|k_0\|^2 + c^2_{\nu}M^2(1+T\exp^T)\biggr)} \; .
\end{array}
   \end{equation}
\\
Let us assume that
\begin{equation}\label{3.40c}
\|v_0\|_{H^1_0} \leq \epsilon(M)\\
\end{equation}
and let us introduce the closed convex set
\begin{align}
G =  \Bigl\{ (\tilde{v} , \tilde{\phi})  &\in L^4 (0,T;V) \times C^0([0,T]) \ : \ \beta_0 \leq \tilde{\phi} \leq M , \ \nonumber\\
 & \ \iint_{Q}|D\tilde{v}|^2 \;dx\;dt \, \leq  \, b^2_1 \ , \ \iint_{Q}|D\tilde{v}|^4 \;dx\;dt \leq 1 \Bigr\} \nonumber
\end{align}
and the mapping $\mathcal{B} : G \mapsto L^4 (0,T;V) \times C^0([0,T])$, given as follows:
\begin{itemize}
  \item First, to each $(\tilde{v} , \tilde{\phi}) \in G$, we associate the solution to the semilinear parabolic system
  \begin{equation} \nonumber
\left\{
\begin{array}
[c]{ll}%
\displaystyle k_t + \tilde{v}\cdot \nabla k - (\kappa + c_0\tilde{\phi})\Delta k + \frac{k^2}{\tilde{\phi}} = c_{\nu} \tilde{\phi}|D\tilde{v}|^2 \; \; \ & \mbox{in}\;\;Q,\\
\vspace{.2cm}
\displaystyle \frac{\partial k}{\partial n} = 0   & \mbox{on}\;\;\Sigma,\\
k(x,0) = k_{0}(x) \;\  & \mbox{on}\;\;\Omega.\\
\end{array}
      \right.
   \end{equation}
  \item Secondly, we associate to $k$ the solution $\phi_0$ to the ODE problem (\ref{1.3}).
  \item Then, we associate to $\phi_0$ the quadruple $(v, q, u, \phi_0),$ where $(v, q, u)=S(0,v_0;\phi_0)$.
  \item Finally, we set $\mathcal{B}(\tilde{v},\tilde{\phi})=(v,\phi_0)$.
\end{itemize}

The mapping $\mathcal{B}$ is well defined, continuous and compact. Indeed, if we introduce the Hilbert space 
$$Y:=\{v \in L^2(0,T;D(A)) : v_t \in L^2(Q)^N\}$$
and we regard $\mathcal{B}$ as a mapping with values in~$Y \times C^1([0,T])$, it is obviously well defined and continuous. Since $Y\hookrightarrow L^4(0,T;V)$ with a compact embedding, the compactness of $\mathcal{B}$ is ensured.

On the other hand, if $\|v_0\|_{H^1_0}$ is sufficiently small, $\mathcal{B}$ maps $G$ into itself.\\
Indeed, from (\ref{1.3}) it is immediate that
\begin{align}
\phi_0(t) \ & \leq   \, \phi_{00}+\frac{(a-2)T}{|\Omega|}\|k\|_{L^{\infty}(0,T;L^1(\Omega))} \ \nonumber \\
& \leq \, \phi_{00}+\frac{(a-2)T}{|\Omega|}\bigl(\|k_0\|_{L^1}+c_{\nu}Mb_1^2\bigr)  \nonumber \\
& = \, M \nonumber
\end{align}
and

    \begin{equation}\nonumber
\begin{array}
[c]{ll}%
 \phi(t) \ &\geq \ \displaystyle\frac{\phi_{00}}{1+ \displaystyle\frac{2c_0 \phi_{00}}{|\Omega|} \iint_Q \displaystyle\frac{|\nabla k|^2}{|\alpha + k|^2}\;dx\;dt} \\\\
 \vspace{.1cm}
&\geq  \displaystyle\frac{\phi_{00}}{1+\displaystyle\frac{c_0 \phi_{00}}{|\Omega|\alpha^2 \kappa}\bigl( \|k_0\|^2 + \|k\|^2_{L^2(Q)} + c^2_{\nu}M^2 \iint_Q |D\tilde{v}| \;dx\;dt \bigr)} \\
&\geq \ \beta_0 \
\end{array}
   \end{equation}
for all $t \in [0,T].$
Moreover, since we have (\ref{3.40a}) with $f=0$, there exists $\tilde{\epsilon}$, only depending on $\Omega, \ \omega, \ T, \ \nu, \ c_{\nu}$ and $M$ such that, whenever $\|v_0\|_{H^1_0} \leq \tilde{\epsilon}$, one has
$$\iint_Q |Dv|^2 \;dx\;dt \ \leq \ b^2_1  \ \ \ \ \mbox{and} \ \ \ \ \iint_Q |Dv|^4 \;dx\;dt \ \leq \ 1 \ .  $$

As a consequence, we can apply \textit{Schauder's Theorem} to $\mathcal{B}$ and deduce the existence of a fixed point. This proves that \eqref{1.2}--\eqref{1.3} is partially locally null-controllable and ends the proof.

\section{Some additional comments and questions}
\noindent

The partially globally null controllability of \eqref{1.2}--\eqref{1.3} is an open question. It does not seem easy to solve and the answer is unknown even for the Navier-Stokes system with Dirichlet boundary conditions on $v$. Indeed, the smallness assumption on the data in \textit{Theorem}~\ref{T1} is clearly necessary if one tries to apply \textit{Theorem}~\ref{T3} or another result playing the same role. To prove a global result, we should have to make use of a global inverse mapping theorem, but this needs much more complicate estimates, that do not seem affordable.

Note that, with other or with no boundary conditions, global null controllability results have been established for Navier-Stokes and Boussinesq fluids by Coron \cite{Coron} and Coron and Fursikov \cite{Coron F} in the two-dimensional case, Fursikov and Imanuvilov \cite{F-III} and Coron, Marbach and Sueur \cite{4a} in the three-dimensional case. Accordingly, it is reasonable to expect results of the same kind when the PDEs in \eqref{1.2} are completed, for instance, with Navier-slip or periodic boundary conditions.

Another open question, in part connected to the previous one, concerns the partial exact controllability to the trajectories.

It is said that \eqref{1.2}--\eqref{1.3} is partially locally exactly controllable to trajectories at time $T$ if, for any solution $(\widehat{v}, \widehat{q})$ corresponding to a control $\widehat{u}$, there exists $\epsilon > 0$ such that, if
$$\|v_{0}-\widehat{v}(\cdot, 0)\|_{H^1_{0}} \leq \epsilon,$$
we can find controls $u \in L^2(\omega \times (0,T))^N$ and associated states $(v, q)$ satisfying
\begin{equation}\label{6.6}
v(x,T)=\widehat{v}(x,T) \ \ \; \ in \ \ \; \Omega.
\end{equation}

The previous property was established for the \textit{Navier-Stokes} and \textit{Boussinesq} fluids respectively in~\cite{15p} and~\cite{Enrique_Guerrero_Puel}. However, to our knowledge, it is unknown whether it holds for \eqref{1.2}--\eqref{1.3}. If one tries to apply arguments as those above, one finds at once a major difficulty: one is led to a system similar to \eqref{2.1b}, where linear nonlocal terms appear, for which observability estimates are not clear at all.

Let us also indicate that it would be interesting to see whether the arguments in \cite{Carreno-SGR} and \cite{E.M3} can be applied in this context to establish the partially local null controllability of \eqref{1.2}--\eqref{1.3} with $N-1$ scalar controls; when $N=3$, the controllability of \eqref{1.2}--\eqref{1.3} with one single scalar control is also a problem to consider, in view of the results in \cite{Lissy-Coron}.

The main result in this paper may be viewed as a first step in the controllability analysis of $k-\varepsilon$ models of turbulence. In this direction, let us indicate that it would be satisfactory to prove the local null controllability of~\eqref{1.1}. However, this seems difficult at present.

Another interesting aspect is the computation of ``good" null controls of~\eqref{1.2}--\eqref{1.3}.
If would be interesting to address an efficient iterative algorithm, able to produce a sequel of controls that converge to a null control for this system. This will be the goal of a forthcoming paper.

\

\noindent \textbf{Acknowledgements.} This paper is the result of a collaboration that took place during several visits of the first and third authors to the Institute of Mathematics of the University of Sevilla~(IMUS). The authors are indebted to IMUS for its assistance.




\end{document}